\newtheorem{theorem}{Theorem}[section]
\newtheorem{proposition}[theorem]{Proposition}
\newtheorem{lemma}[theorem]{Lemma}
\newtheorem{corollary}[theorem]{Corollary}
\theoremstyle{remark}
\newtheorem{remark}[theorem]{Remark}
\numberwithin{equation}{section}
\newcommand{\R}{\mathbb{R}}
\newcommand{\C}{\mathbb{C}}
\newcommand{\xt}{(u,v,w)}
\newcommand{\lb}{\lambda}
\newcommand{\slb}{\sqrt{\lambda}}
\newcommand{\dd}{\mathrm{d}}
\newcommand{\ep}{\varepsilon}
\DeclareMathOperator{\real}{Re}
\DeclareMathOperator{\imag}{Im}
\DeclareMathOperator{\Ran}{Ran}
\DeclareMathOperator{\Ker}{Ker}
\DeclareMathOperator{\dist}{dist}
\DeclareMathOperator{\spann}{span}
\title[Energy decay in an infinite 1-D wave-heat system]{Optimal energy decay in a one-dimensional  wave-heat system with infinite heat part}
\author[A.C.S.\ Ng]{Abraham C.S.\ Ng}
\address[A.C.S.\ Ng]{St Edmund Hall, Queen's Lane, Oxford, OX1 4AR, UK}
\email{abraham.ng@maths.ox.ac.uk}
\author[D. Seifert]{David Seifert}
\address[D. Seifert]{School of Mathematics, Statistics and Physics, Herschel Building, Newcastle University, Newcastle upon Tyne, NE1 7RU, UK}
\email{david.seifert@ncl.ac.uk}
\begin{document}

\begin{abstract}
  Using recent results in the theory of $C_0$-semigroups due to Batty, Chill and Tomilov (J.\ Eur.\ Math.\ Soc.\ 18(4):853--929, 2016)  we study energy decay in a one-dimensional coupled wave-heat system with finite wave part and infinite heat part. Our main result provides a sharp estimate for the rate of energy decay of a certain class of classical solutions. The present paper can be thought of as a natural sequel to a recent work by Batty, Paunonen and Seifert (J.\ Evol.\ Equ.\ 16:649--664, 2016), which studied a similar wave-heat system with finite wave and heat parts using a celebrated result due to Borichev and Tomilov.
\end{abstract}

\subjclass[2010]{35M33, 35B40, 47D06 (34K30, 37A25).}
\keywords{Wave equation, heat equation, coupled, energy, rates of decay, $C_0$-semigroups, resolvent estimates.}

\maketitle

\section{Introduction}

We study the following system consisting of a wave equation on a finite interval coupled at one end with a heat equation on an infinite interval:
\begin{equation}\label{eq1}
    \begin{cases}\begin{aligned}  u_{tt}(\xi,t) &= u_{\xi\xi}(\xi,t), &&& \xi \in (-1,0),\ t>0,\\
     w_t(\xi,t) &= w_{\xi\xi}(\xi,t), &&& \xi \in (0,\infty),\  t>0, \\
     u_t(0,t) &= w(0,t), &u_\xi(0,t) &= w_\xi(0,t), \quad& t>0, \\
     u(-1,t) &= 0, & && t>0, \\
     u(\xi,0)&=u(\xi), & u_t(\xi,0) &= v(\xi),  & \xi \in (-1,0), \\
     w(\xi,0) &= w(\xi), &&& \xi \in (0,\infty).
    \end{aligned}\end{cases}
\end{equation}
Here we assume that the initial data $u,v, w$ satisfy $u \in H^1(-1,0), v \in L^2(-1,0)$ and $w \in L^2(0,\infty)$. Given initial data $x=(u,v,w)$, we define the energy of the solution corresponding to $x$ as
\begin{equation}
\label{eq:energy}
E_x(t) = \frac{1}{2}\int_{-1}^\infty |u_\xi(\xi,t)|^2 + |u_t(\xi,t)|^2 + |w(\xi,t)|^2 \, \dd\xi, \qquad t\geq 0.
\end{equation}
All functions on the right-hand side have been extended by zero in $\xi$ to the interval $(-1,\infty)$. Assuming sufficient regularity of the solution, a simple calculation via integration by parts shows that
$$E'_x(t) =  - \int_0^\infty|w_\xi(\xi,t)|^2\, \dd\xi, \qquad t\geq0,$$  
so the energy is non-increasing in time. The main contribution of this paper is to obtain a sharp estimate on the \emph{rate} of this energy for a natural class of sufficiently well-behaved solutions.

A finite interval version of the above problem (with a different coupling condition) was studied in \cite{ZZ1}, where both the wave and the heat components were of unit length and had Dirichlet boundary conditions, yielding the optimal decay rate $E_x(t)= O(t^{-4})$. The method used in \cite{ZZ1} relied on a rather intricate spectral analysis, which was required in order to apply the theory of Riesz spectral operators. The same finite interval wave-heat system, this time with Neumann boundary for the wave part (and with the same coupling condition as in \eqref{eq1}), was studied in \cite{BPS1}. In contrast to \cite{ZZ1}, however, the approach in \cite{BPS1} was based on the methods of non-uniform stability \cite{BaDu, BoTo}, which greatly simplified the analysis necessary to obtain this sharp rate of decay.  The abstract methods in \cite{BaDu, BoTo} depend on the imaginary axis lying within the resolvent set of the generator of the solution semigroup. This was generalised in \cite{BaChTo, ChSe, Mar11, RSS} to allow for the case where the spectrum of the generator touches the imaginary axis at zero. A similar approach has recently been used in \cite{BPS2} to study a  wave-heat system on a rectangular domain. For surveys of similar problems we refer the interested reader to \cite{AvTr, BPS1}.

In our paper, we follow the approach of \cite{BPS1}, but where the authors of \cite{BPS1} appeal to the main result of \cite{BoTo} we use instead the generalised result due to Batty, Chill and Tomilov \cite{BaChTo}, finding in the main result of the paper, Theorem \ref{opt}, a class of classical solutions to \eqref{eq1} which satisfy
$$E_x(t) = O(t^{-2}), \qquad t \to \infty.$$ We further establish that this rate sharp. This shows that extending the heat part of the coupled wave-heat system of \cite{BPS1} to infinity slows the rate of energy decay by a factor of $t^{2}$. The crucial difference between the infinite and the finite systems is that the damping provided by the heat part is significantly weaker in the infinite case.

The paper is organised as follows. First, in Section~\ref{sec:gen}, we show that our problem can be recast as an abstract Cauchy problem and solved in the sense of $C_0$-semigroups. We also provide a detailed description of the spectrum of the semigroup generator. In Section~\ref{sec:res}, we establish sharp upper bounds for the norm of the resolvent operator along the imaginary axis, both near zero and at infinity. Then, in Section~\ref{sec:en}, we apply the Batty-Chill-Tomilov result to deduce an optimal estimate for the rate of energy decay for a certain class of classical solutions -- namely, for solutions with initial data lying in both the domain and the range of $A$. We moreover provide a  characterisation of the range of $A$. Finally, in Section~\ref{sec:Neumann}, we consider the case where the wave part of the coupled system satisfies a Neumann boundary condition. We show that the approach taken in the Dirichlet case now leads to an unbounded semigroup, and we consider an alternative formulation of the problem which allows us to recover our main results in the Neumann boundary case.

We use standard notation, closely following that used in \cite{BPS1}.  We thus denote by $D(A),$ $\Ker(A),$ $\Ran(A)$, $\sigma(A)$, and $\rho(A)$ the domain, kernel, range, spectrum and resolvent set, respectively, of a closed linear operator $A$ acting on a Hilbert space (assumed always to be complex). The resolvent operator $(\lb-A)^{-1}$, for $\lb \in \rho(A)$, will usually be written as $R(\lb,A)$. Given $\lb \in \C$, we define the square root $\sqrt{\lb}$ by taking the branch cut along the negative real axis, that is, for $\lb = re^{i\theta}$ where $r\ge0$ and $\theta \in (-\pi,\pi]$, we let $\sqrt{\lb} = r^{1/2}e^{i\theta/2}$. We denote the open complex left half-plane by $\C_-$. Given functions $f,g\colon (0,\infty)\to\R_+$ and $a\in [0,\infty]$, we write $f(t) = O(g(t)),\ t\to a,$ to indicate that $f(t)\leq Cg(t)$ for some constant $C>0$ and all $t>0$ sufficiently close to $a$ (or sufficiently large in the case $a = \infty$). We  write $f(t) \asymp g(t), $ $t\to a$, if both $f(t) = O(g(t))$ and $g(t) = O(f(t))$ as $t\to a$. If $g(t) >0$ for all sufficiently large $t>0$, we write $f(t) = o(g(t)),\ t\to\infty$, if $f(t)/g(t)\to0$ as $t\to \infty$.  We treat the case for functions defined on $\R$ or $\R\setminus\{0\}$ analogously. Finally, if $p$ and $q$ are real-valued quantities we will often use the notation $p \lesssim q$ to mean that $p \leq Cq$ for some constant $C >0$.

\subsection*{Acknowledgements}   Both authors thank Hansen Chen and Charles Batty for helpful discussions on the topic of this paper. The first author is grateful  to the University of Sydney for funding this work through the Barker Graduate Scholarship.

\section{Well-posedness -- the semigroup and its generator}\label{sec:gen}

In this section, we first prove that \eqref{eq1} is well-posed with solutions given by the orbits of a $C_0$-semigroup of contractions, before turning to look at the spectrum of the semigroup generator.

\subsection{Existence of the semigroup}

Our first step is to recast \eqref{eq1} as an abstract Cauchy problem. Consider the Hilbert space 
$$
X=\big\{(u,v,w) \in H^1(-1,0)\times L^2(-1,0)\times L^2(0,\infty): u(-1) = 0\big\},$$
endowed with the norm $\|\cdot\|_X$ given by
$$\|x\|_X^2 = \|u'\|_{L^2(-1,0)}^2 + \|v\|_{L^2(-1,0)}^2+ \|w\|_{L^2(0,\infty)}^2$$
for $x=(u,v,w)\in X$, and let
$$X_0=\big\{(u,v,w) \in H^2(-1,0)\times H^1(-1,0) \times H^2(0,\infty) : u(-1) =0\big\}.$$
We define the operator $A$ on $X$ by $Ax = (v,u'',w'')$ for $x=(u,v,w)$ in the domain
\begin{equation*}D(A) = \big\{(u,v,w) \in X_0 :  v(-1) = 0,\ u'(0) = w'(0),\ v(0) = w(0)\big\}.\end{equation*}

\begin{lemma}\label{biglemma1} The following hold:
\begin{enumerate}
    \item[\textup{(a)}] $A$ is closed;
    \item[\textup{(b)}] $A$ is densely defined;
    \item[\textup{(c)}] $A$ is dissipative;
    \item[\textup{(d)}] $I-A$ is surjective.
\end{enumerate}
\end{lemma}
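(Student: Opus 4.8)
The plan is to establish the four properties in the order (c), (d), (b), (a), since dissipativity and surjectivity of $I-A$ are the substantive points and the other two follow cheaply. For dissipativity, I would take $x=(u,v,w)\in D(A)$ and compute $\real\langle Ax,x\rangle_X$. Writing this out, $\langle Ax,x\rangle_X = \langle v', u'\rangle_{L^2(-1,0)} + \langle u'', v\rangle_{L^2(-1,0)} + \langle w'', w\rangle_{L^2(0,\infty)}$. Integrating by parts in the middle term (using $v(-1)=0$ from the domain) gives $\langle u'',v\rangle = u'(0)\overline{v(0)} - \langle u', v'\rangle$, so the first two terms combine to $2i\,\imag\langle v',u'\rangle + u'(0)\overline{v(0)}$, contributing $\real$-part $\real\big(u'(0)\overline{v(0)}\big)$. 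Integrating by parts in the heat term gives $\langle w'',w\rangle = -w'(0)\overline{w(0)} - \|w'\|^2_{L^2(0,\infty)}$ (the boundary term at $\infty$ vanishes since $w\in H^2(0,\infty)$). Now apply the coupling conditions $u'(0)=w'(0)$ and $v(0)=w(0)$: the two boundary contributions cancel, leaving $\real\langle Ax,x\rangle_X = -\|w'\|_{L^2(0,\infty)}^2 \le 0$. This is exactly the energy identity quoted in the introduction.

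For surjectivity of $I-A$, given $y=(f,g,h)\in X$ I need $x=(u,v,w)\in D(A)$ with $u-v=f$, $v-u''=g$, $w-w''=h$. From the first equation $v=u-f$, so the wave part reduces to the boundary-value problem $u-u''=f+g$ on $(-1,0)$ with $u(-1)=0$ and $v(-1)=0$, i.e.\ $u(-1)=0$; note $v(-1)=u(-1)-f(-1)$, and since $f\in H^1$ with $f(-1)=0$ this is consistent. The heat part is $w-w''=h$ on $(0,\infty)$, whose bounded solutions are governed by the equation $w''=w-h$; the general decaying solution can be written via the Green's function for $-\partial_{\xi\xi}+1$ on the half-line, $w(\xi)=\alpha e^{-\xi}+\tfrac12\int_0^\infty e^{-|\xi-s|}h(s)\,\dd s$, leaving one free constant $\alpha$. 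The wave ODE $u-u''=f+g$ with $u(-1)=0$ likewise has a one-parameter family of solutions (parametrised by, say, $u'(-1)$). The two remaining coupling conditions $u'(0)=w'(0)$ and $u(0)=v(0)+f(0)=w(0)+f(0)$ become two linear equations in the two free constants; I would check this $2\times2$ system is invertible (it should be, since the homogeneous problem $x-Ax=0$ has only the trivial solution by dissipativity: if $x=Ax$ then $\real\langle x,x\rangle=\real\langle Ax,x\rangle\le0$ forces $x=0$). Regularity $u\in H^2$, $w\in H^2$ is automatic from the ODEs once $f,g,h$ have the stated integrability.

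Density of $D(A)$ follows because $D(A)$ contains, for instance, all triples $(u,v,w)$ with $u,v\in C_c^\infty(-1,0)$ and $w\in C_c^\infty(0,\infty)$ (all coupling and boundary conditions being trivially satisfied), and such triples are dense in $X$. Closedness of $A$ then comes for free: by (c) and (d) together with the Lumer–Phillips theorem, $A$ generates a $C_0$-semigroup of contractions, hence in particular $A$ is closed; alternatively one checks directly that $I-A\colon D(A)\to X$ is a bijection with bounded inverse (the inverse is bounded since $\|x\|_X\le\|(I-A)x\|_X$ by dissipativity), so $D(A)=(I-A)^{-1}X$ is the image of a bounded operator and $A$ is closed. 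I expect the main obstacle to be the bookkeeping in the surjectivity step — correctly setting up the Green's function on the half-line, tracking the free constants, and verifying the determinant of the resulting $2\times 2$ system is nonzero — though the dissipativity estimate already guarantees injectivity of $I-A$, which makes the invertibility of that system essentially automatic.
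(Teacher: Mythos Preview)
Your arguments for (c) and (d) are essentially the paper's: integration by parts plus the coupling conditions for dissipativity, and an explicit Green's function construction leading to a $2\times 2$ linear system for surjectivity. Your observation that invertibility of the $2\times 2$ system follows from injectivity of $I-A$ (which in turn follows from dissipativity) is a nice shortcut; the paper instead computes the determinant explicitly for general $\lambda\in\C\setminus(-\infty,0]$, since the same computation is reused later for spectral and resolvent analysis. Your route to closedness via the bounded inverse $(I-A)^{-1}$ is also valid and cleaner than the paper's direct sequence argument.

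However, your density argument has a genuine gap. The triples $(u,v,w)$ with $u\in C_c^\infty(-1,0)$ are \emph{not} dense in $X$: the first component of $X$ is $\{u\in H^1(-1,0):u(-1)=0\}$ with norm $\|u'\|_{L^2}$, and convergence in this norm (together with $u(-1)=0$) forces uniform convergence on $[-1,0]$, hence convergence of the trace $u(0)$. Since every $u\in C_c^\infty(-1,0)$ has $u(0)=0$, you can only approximate elements with $u(0)=0$, which is a proper closed subspace. You need to allow test functions in the first slot that do not vanish at $\xi=0$; once you do this, the coupling conditions $u'(0)=w'(0)$ and $v(0)=w(0)$ are no longer trivially satisfied and must be arranged. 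The paper handles density instead by a chain-of-kernels argument: it observes that each of the three linear constraints defining $D(A)$ inside $X_0$ (namely $v(-1)=0$, $v(0)=w(0)$, $u'(0)=w'(0)$) is the kernel of an unbounded linear functional, hence has dense kernel, and density of $X_0$ in $X$ is standard.
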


\begin{proof}
(a) Let $x_n = (u_n,v_n,w_n) \in D(A)$, $n\ge1$, be such that $$x_n\to x = (u,v,w),\qquad  Ax_n = (v_n,u_n'',w_n'') \to y = (f,g,h)$$ in $X$ as $n\to\infty$. Then $u_n$ converges to $u$ in $H^1(-1,0)$ and $u_n''$ converges to $g$ in $L^2(-1,0)$ as $n\to\infty$. Hence
\begin{equation}\label{wkd}\int_{-1}^0 u\varphi'' = \lim_{n\to \infty}\int_{-1}^0 u_n\varphi'' = \lim_{n\to\infty}\int_{-1}^0 u_n'' \varphi = \int_{-1}^0 g\varphi \end{equation} for all $\varphi \in C_c^\infty(-1,0)$,  so that $u \in H^2(-1,0)$ and $u'' = g$. As $v_n$ converges to both $v$ and $f$ in $L^2(-1,0)$ as $n\to\infty$, we have $v=f$. In particular, $v \in H^1(-1,0)$. Next, $w_n$ converges to $w$ and $w_n''$ to $h$ in $L^2(0,\infty)$ as $n\to\infty$. Standard Sobolev theory (see \cite[page ~217]{Brezis}) ensures the existence of a constant $C>0$ such that $$\|\psi'\|_{L^2(0,\infty)} \leq \|\psi''\|_{L^2(0,\infty)} + C\|\psi\|_{L^2(0,\infty)}$$ for all $\psi \in H^2(0,\infty)$, so that $(w_n')$ is Cauchy and converges to some $h_0$ in $L^2(0,\infty)$. Using similar reasoning to that in \eqref{wkd}, we see that $w\in H^2(0,\infty)$ with $w' = h_0$ and $w'' = h$. By passing to a subsequence for which all the components converge almost everywhere we may verify that $x$ satisfies the necessary coupling conditions to be in the domain $D(A)$. It follows that $Ax = y$, and hence $A$ is closed.\smallskip

\noindent (b) The linear functional $\phi_0 \colon x = (u,v,w) \mapsto v(-1)$ is unbounded on $X_0$, and hence
$$X_1 = \Ker \phi_0 = \big\{(u,v,w) \in X_0 : v(-1)=0 \big\}$$ is dense in $X_0$. Similarly,
$X_2 = \Ker \phi_1 $ is dense in $X_1$, where $\phi_1$ is the unbounded linear functional on $X_1$ defined by $x \mapsto v(0)-w(0)$, and by considering the unbounded linear functional $\phi_2 \colon x \mapsto u'(0) - w'(0)$ on $X_2$, we see that $X_3 = \Ker \phi_2 $ is dense in $X_2$. Thus we have a decreasing finite chain of subspaces $$X \supset X_0 \supset X_1 \supset X_2 \supset X_3 = D(A),$$ where each subspace is dense in the preceding one under the norm of $X$. Hence $A$ is densely defined.\smallskip

\noindent (c) Let $x=(u,v,w) \in D(A)$. A straightforward calculation yields 
$$\langle Ax,x\rangle_X  = v(0)\overline{u'(0)} -w'(0)\overline{w(0)} - \int_{-1}^0v\overline{u''} + \int_{-1}^0 u''\overline{v} -\int_0^\infty w'\overline{w'},$$ and hence $$ \real{\langle Ax,x\rangle}_X = -\|w'\|_{L^2}^2 \leq 0,$$ so $A$ is dissipative. Here and in what follows we omit the intervals for function spaces appearing as subscripts, as these will always be clear from the context.\smallskip

\noindent (d) We perform a procedure here with general $\lambda \in\C\setminus(-\infty,0]$ which will reappear in later sections. For the purposes of this section it would be sufficient to consider $\lambda = 1$. Given $y = (f,g,h) \in X$, we wish to find an $x = \xt\in D(A)$ such that $(\lambda-A)x = y$, which leads to the system \begin{subequations}\begin{align} u'' & = \lambda^2 u-\lambda f -g, & \xi \in (-1,0), \label{eqa}\\ v & = \lambda u-f, & \xi \in (-1,0), \\  w'' & = \lambda w-h, & \xi \in (0,\infty), \label{eqc} \\u(-1) &= v(-1) = 0, \ v(0)  = w(0),\ u'(0) = w'(0).
\end{align}\end{subequations}
Following the proof of \cite[Theorem~3.1]{BPS1}, the general  solution of  \eqref{eqa} subject to $u(-1) = 0$ is easily seen to be
\begin{equation}\label{equ}
    u(\xi) = a(\lambda)\sinh(\lambda(\xi+1)) - U_\lambda(\xi), \qquad\xi \in [-1,0],
\end{equation} where $a(\lambda) \in \C$ is a constant free to be determined later and 
$$U_\lambda(\xi) =  \frac{1}{\lambda}\int_{-1}^\xi \sinh(\lambda(\xi-r))(\lambda f(r)+g(r))\, \dd r, \qquad \xi \in [-1,0].$$
We thus have
\begin{equation}\label{equ'}
    u'(\xi) = \lambda a(\lambda)\cosh(\lambda(\xi+1)) - U_\lambda'(\xi), \qquad \xi \in [-1,0].
\end{equation}  
Clearly, $u\in H^2(-1,0)$ and hence $v\in H^1(-1,0)$ with $v(-1)=\lambda u(-1)-f(-1) =0$. To find the general solution for \eqref{eqc}, note first that by taking the  branch cut along  $(-\infty,0]$ we ensure that ${\real{\sqrt{\lb}}} >0$ for all $\lb \in \C\setminus(-\infty,0]$, and hence we may define the Green's function $G_\lambda\in L^1(\R)$ by
$$G_\lambda(\xi) = \frac{1}{2\sqrt{\lb}}e^{-\sqrt{\lb}|\xi|},\qquad \xi\in\R.$$ We now define the function $w$ by
\begin{equation}\label{eqw}w(\xi) = (G_\lambda*h)(\xi) + b(\lb)e^{-\slb \xi}, \qquad\xi \in [0,\infty),\end{equation} 
where $b(\lambda)\in\C$ is another free parameter to be chosen shortly.  Young's inequality for convolutions then implies that $w \in H^2(0,\infty)$ and, letting
$$W_\lambda(\xi)=\frac{1}{2\slb}e^{\slb \xi} \int_\xi^\infty h(r)e^{-\slb r}\,\dd r,\qquad \xi\in[0,\infty),$$
we have
$$(G_\lambda*h)(\xi)=W_\lambda(\xi)+\frac{1}{2\slb}e^{-\slb \xi} \int_0^\xi h(r)e^{\slb r}\,\dd r,\qquad\xi \in [0,\infty).$$
A simple calculation now shows that $w$ solves \eqref{eqc}. 

It remains to determine the constants $a(\lambda)$ and $b(\lambda)$. Using \eqref{equ} and \eqref{eqw}, the coupling condition $\lambda u(0)-f(0) = v(0) = w(0)$ gives
$$\lambda a(\lambda)\sinh(\lambda) - b(\lambda) = \lambda U_\lambda(0) + f(0) + W_\lb(0).$$ Likewise, the condition $u'(0) = w'(0)$ is equivalent to $$\lambda a(\lambda)\cosh(\lambda)+b(\lambda)\sqrt{\lambda} = U_\lambda'(0) + \slb W_\lb(0).$$ We write these two equations in matrix form as
\begin{equation}\label{eqM}
\begin{pmatrix}
    \lambda\sinh(\lambda) & -1\\ \lambda \cosh(\lambda) & \slb
    \end{pmatrix}  \begin{pmatrix} a(\lambda) \\ b(\lambda)
     \end{pmatrix} = \begin{pmatrix} \lambda U_\lambda(0) + f(0) + W_\lb(0) \\ U_\lambda'(0) + \slb W_\lb(0)
     \end{pmatrix},
\end{equation}
Hence, for $\lb \in \C\setminus(-\infty,0]$, $\lb - A$ is surjective if and only if \eqref{eqM} has a solution for any given $y = (f,g,h)\in X$, which in turn is equivalent to $\det M_\lb\ne0$, where $M_\lb$ is the matrix appearing on the left-hand side of \eqref{eqM}. Note that 
$$\det M_\lb = \lb\big(\cosh(\lb)+\slb \sinh(\lb)\big).$$  
Since $\det M_1 =e \neq 0,$
    we see that $I-A$ is surjective, as required.
\end{proof}

The following result is an immediate consequence of Lemma~\ref{biglemma1} and the  Lumer-Phillips theorem.

\begin{theorem}\label{wpthm}
$A$ generates a contractive $C_0$-semigroup $(T(t))_{t\geq0}$ on $X$.
\end{theorem}

\subsection{Spectrum of the generator}

From Theorem \ref{wpthm}, we know that $\sigma(A)$ is contained in the closed left half-plane. However, we can say more about the spectrum.

\begin{theorem}\label{spectrum}
The spectrum of $A$ is given by the disjoint union 
$$\sigma(A) = (-\infty,0]\cup\sigma_p(A),$$ 
where the point spectrum is given by 
\begin{align*}\sigma_p(A) = \big\{\lb \in \C_- : \cosh(\lb)+\slb\sinh(\lb)  = 0\big\}.\end{align*}
In particular, the spectrum satisfies $\sigma(A) \cap i\R = \{0\}$.
\end{theorem}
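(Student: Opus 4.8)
The plan is to describe $\sigma(A)$ by combining the resolvent construction from part~(d) of Lemma~\ref{biglemma1} (extended from $\lambda=1$ to general $\lambda\in\C\setminus(-\infty,0]$) with a direct analysis of the ray $(-\infty,0]$. First I would treat $\lambda\in\C\setminus(-\infty,0]$: the computation in the proof of Lemma~\ref{biglemma1}(d) already shows that $\lambda-A$ is surjective precisely when $\det M_\lambda = \lambda\bigl(\cosh(\lambda)+\slb\sinh(\lambda)\bigr)\ne 0$, and since $\lambda\ne 0$ on this set, this reduces to $\cosh(\lambda)+\slb\sinh(\lambda)\ne 0$. To get the full spectral picture I also need injectivity: when $\det M_\lambda\ne 0$ the homogeneous system (take $y=0$, so $U_\lambda\equiv 0$, $W_\lambda\equiv 0$) forces $a(\lambda)=b(\lambda)=0$ and hence $x=0$, so $\lambda-A$ is injective; conversely, when $\det M_\lambda = 0$ there is a nonzero solution $(a(\lambda),b(\lambda))$ of the homogeneous system, which produces a genuine eigenfunction $x\in D(A)$ (one must check $x\ne 0$, which holds because $\sinh(\lambda(\cdot+1))$ and $e^{-\slb\xi}$ are not both trivial). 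Thus on $\C\setminus(-\infty,0]$ we have $\lambda\in\sigma(A)$ iff $\lambda\in\sigma_p(A)$ iff $\cosh(\lambda)+\slb\sinh(\lambda)=0$, and by Theorem~\ref{wpthm} these eigenvalues all lie in $\overline{\C_-}$; one checks no such root has $\real\lambda=0$ except possibly $\lambda=0$ itself (and $\lambda=0$ is excluded as it lies on the branch cut — it will be handled separately). Writing $\lambda = -\mu^2$ type substitutions or just observing $\cosh(it)+i\sqrt{it}\,\sinh(it)$ cannot vanish on the imaginary axis for $t\ne0$ (real and imaginary parts cannot simultaneously vanish) gives $\sigma_p(A)\subset\C_-$.

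Next I would show $(-\infty,0]\subset\sigma(A)$. This is the part where the infinite heat part really matters: for $\lambda\le 0$ the Green's function construction breaks down because $\sqrt{\lambda}$ is purely imaginary (or zero), so $e^{-\slb\xi}\notin L^2(0,\infty)$ and the heat resolvent on the half-line has continuous spectrum filling $(-\infty,0]$. Concretely I would argue that for $\lambda\in(-\infty,0)$ the operator $\lambda-A$ is not surjective (or not bounded below) by exhibiting an approximate eigenvalue: using Weyl sequences supported in the heat component far out on $(0,\infty)$, e.g.\ $w_n(\xi)=\chi_n(\xi)e^{i\sqrt{-\lambda}\,\xi}$ with $\chi_n$ suitable cutoffs translated to infinity and $u=v=0$, one checks $\|(\lambda-A)x_n\|_X/\|x_n\|_X\to 0$; the boundary and coupling conditions are satisfied for free since everything is supported away from $\xi=0$. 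For $\lambda=0$ one can use a similar singular sequence (e.g.\ slowly varying $w_n$) or note directly that $0\in\sigma(A)$ because $A$ is not invertible — indeed solving $Ax=y$ with the heat equation $w''=h$ on $(0,\infty)$ generally fails to keep $w\in L^2$. This establishes $(-\infty,0]\subseteq\sigma(A)$, and combined with the previous paragraph and the fact that $\C\setminus(-\infty,0]$ contributes only the point spectrum, we get $\sigma(A) = (-\infty,0]\cup\sigma_p(A)$; disjointness is clear since $\sigma_p(A)\subset\C_-$ which is disjoint from $(-\infty,0]\subset\R$.

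Finally, the statement $\sigma(A)\cap i\R=\{0\}$ follows by assembling the pieces: $(-\infty,0]\cap i\R=\{0\}$, and $\sigma_p(A)\cap i\R=\emptyset$ from the argument that the characteristic equation $\cosh(\lambda)+\slb\sinh(\lambda)=0$ has no purely imaginary roots (for $\lambda=it$, $t\ne 0$, $\sqrt{it}=|t|^{1/2}e^{i\pi/4}$ has positive real part so $\real\langle Ax,x\rangle=-\|w'\|^2<0$ unless $w'\equiv0$, and if $w'\equiv0$ on $(0,\infty)$ then $w\equiv0$ since $w\in L^2$, forcing the coupling conditions $v(0)=w(0)=0$, $u'(0)=w'(0)=0$ and then the wave part on a finite interval with these boundary conditions has no purely imaginary eigenvalue; alternatively just verify the transcendental equation directly). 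The main obstacle is the careful treatment of the ray $(-\infty,0]$: one has to confirm that every such $\lambda$ is genuinely in the spectrum (not just that the particular resolvent ansatz fails) and that nothing outside $(-\infty,0]\cup\sigma_p(A)$ sneaks in, which is where the explicit formulae \eqref{equ}--\eqref{eqw} and the determinant condition do the work, together with a Weyl-sequence argument adapted to the half-line heat operator.
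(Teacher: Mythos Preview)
Your strategy is essentially the paper's: Weyl sequences supported in the heat component to show $(-\infty,0]\subset\sigma(A)$, the determinant condition $\det M_\lambda$ from Lemma~\ref{biglemma1}(d) to handle $\C\setminus(-\infty,0]$, and a direct check that the characteristic equation has no nonzero purely imaginary roots. The paper proceeds in exactly this order with the same ingredients.

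There is, however, one genuine slip. You claim ``disjointness is clear since $\sigma_p(A)\subset\C_-$ which is disjoint from $(-\infty,0]\subset\R$'', but $\C_-$ denotes the \emph{open} left half-plane and therefore contains all of $(-\infty,0)$; the sets are not disjoint. What is actually required, and what your argument never supplies, is that $(-\infty,0]$ contains no eigenvalues of $A$. Your determinant analysis only characterises $\sigma_p(A)$ on $\C\setminus(-\infty,0]$, so without this step you have neither established the stated formula for $\sigma_p(A)$ nor the disjointness of the union. The paper fills this gap directly: for $\lambda\le 0$ the equation $w''=\lambda w$ has no nonzero solution in $L^2(0,\infty)$, hence $w=0$; the coupling conditions then give $v(0)=0$ and $u'(0)=0$, and combined with $u(-1)=0$, $u''=\lambda^2 u$ this forces $u=0$ (and $v=\lambda u=0$). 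This short computation is the missing piece.
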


\begin{proof}
We first show that $(-\infty,0]$ is in the spectrum but contains no eigenvalues. Let $\lb \in (-\infty,0]$ and define
$$h_n^\lb(\xi) = \frac{e^{i\sqrt{-\lb}\xi}}{\sqrt{n}}\Phi\left(\frac{\xi}{n}-1\right), \qquad \xi \geq 0,\ n\ge1,$$ where $\Phi\colon\R\to[0,1]$ is a smooth bump function such that $\Phi (\xi)= 1$ for $|\xi|\le1/2$ and $\Phi (\xi)= 0$ for $|\xi|\ge1$. Then $x_n = (0,0,h_n^\lb) \in D(A)$ and, moreover,
$$\|x_n\|_X^2 = \frac{1}{n}\int_0^\infty \left|\Phi\left(\frac{\xi}{n} -1\right)\right|^2 \dd \xi \geq \frac{1}{n}\int_{n/2}^{3n/2} \dd\xi = 1,\qquad n\ge1.$$ For $\xi\ge0$ and $n\ge1$ we have $h_n^\lb(\xi) = e^{i\sqrt{-\lb}\xi}h_n^0(\xi)$ and
$$\|(h_n^0)'\|_{L^2}^2 \leq  \frac{2}{n^2}\|\Phi'\|_\infty^2, \qquad \|(h_n^0)''\|_{L^2}^2 \leq \frac{2}{n^4}\|\Phi''\|_\infty^2.$$ Hence
$$  \|(\lb-A)x_n\|_X  \leq 2 \sqrt{-\lb}\,\|(h_n^0)'\|_{L^2} +\|(h_n^0)''\|_{L^2} \to 0,\qquad n\to\infty,$$ 
so $\lb$ is an approximate eigenvalue of $A$. If $(\lb-A)x = 0$ for some  $x=(u,v,w) \in D(A)$, then $u''=\lambda^2u$, $v=\lambda u$ and $w''=\lambda w$. Since $\lambda\le 0$, the only solution of the third equation which lies in $L^2(0,\infty)$ is $w=0$, and it then follows easily that $u=0$, and therefore $x=0$. This proves that $(-\infty,0]$ lies in the spectrum of $A$ but contains no eigenvalues.

Assume now that $\lb \in {\overline{\C_-}}\setminus\{0\}$. Following the proof of Lemma~\ref{biglemma1}(d) we see that $(\lb - A)x = 
0$ has a non-zero solution $x\in D(A)$ if and only if $\det M_\lambda=0$, which is equivalent to $ \cosh(\lb)+{\slb\sinh(\lb)}  =0$. The proof of Lemma \ref{biglemma1}(d) further shows that $\lb-A$ is surjective whenever $\cosh(\lb)+{\slb\sinh(\lb)} \neq 0$. Thus $\lb-A$ is invertible whenever $  \cosh(\lb)+{\slb\sinh(\lb)}\neq 0$. To prove that this does not occur for non-zero $\lb$ on the imaginary axis, let $F(s)=\cos(s)+i{\sqrt{is}\sin(s)}$ for $s \in \R\setminus\{0\}$. Then  $\real{F(s)}\ne0$ whenever $\sin(s)=0$, while  $\sin(s)\ne0$ implies that $\imag{F(s)} \neq 0$. Thus $F(s)\ne0$ for all $s\in\R\setminus\{0\}$, and it follows that $\sigma(A)\cap i\R = \{0\}$.
\end{proof}

Note that this result would permit us to obtain explicit asymptotic expansions for the eigenvalues of $A$. We do not pursue this here, however, and indeed it is a strength of our method that no detailed information about the eigenvalues or the corresponding eigenvectors is required; see however Remark~\ref{rouche}(b) below.

\section{Resolvent bounds}\label{sec:res}

Here we obtain sharp upper bounds on the growth of $\|R(is,A)\|$ as $|s|$ tends to zero and infinity. The main result is Theorem~\ref{mainthm} below, which in Section~\ref{sec:en} will be crucial in deriving an optimal estimate for the rate of energy decay of sufficiently well-behaved solutions to \eqref{eq1}.

\begin{theorem}\label{mainthm}The following hold:
\begin{enumerate}
    \item[\textup{(a)}] $\|R(is,A)\| = O(|s|^{1/2})$ as $|s| \to \infty$;
    \item[\textup{(b)}]  $\|R(is,A)\| = O(|s|^{-1})$ as $|s| \to 0$.
\end{enumerate}
\end{theorem}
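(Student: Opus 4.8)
The plan is to obtain explicit formulas for the resolvent $R(\lambda, A)$ from the construction in the proof of Lemma~\ref{biglemma1}(d) and then estimate each ingredient along the imaginary axis. Recall from that proof that, given $y = (f,g,h) \in X$, the solution $x = (u,v,w)$ of $(\lambda - A)x = y$ is built from \eqref{equ}, \eqref{eqw}, with the constants $a(\lambda), b(\lambda)$ determined by Cramer's rule applied to \eqref{eqM}; in particular $a(\lambda)$ and $b(\lambda)$ each carry a factor $1/\det M_\lambda = 1/\big(\lambda(\cosh\lambda + \slb\sinh\lambda)\big)$. Since Theorem~\ref{spectrum} tells us $i\R \setminus \{0\} \subset \rho(A)$, the first task is to quantify how small $|\det M_{is}|$ can be; writing $\lambda = is$ with $s \in \R \setminus \{0\}$ and using $\cosh(is) = \cos s$, $\sinh(is) = i\sin s$, $\sqrt{is} = |s|^{1/2}e^{i\pi/4}\operatorname{sgn}(s)^{1/2}$ one finds $|\cosh(is) + \sqrt{is}\sinh(is)|$ behaves like a bounded-below oscillation for large $|s|$ (away from zero by the argument in the proof of Theorem~\ref{spectrum}, made quantitative) and like $1 + O(s)$ as $s \to 0$. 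Thus $|\det M_{is}|^{-1} = O(|s|^{-1})$ as $|s|\to\infty$ but only $O(|s|^{-1})$ again as $|s|\to 0$ — the genuine singularity near zero comes from the explicit factor $\lambda$ in $\det M_\lambda$, not from the hyperbolic part.

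Next I would bound the "particular solution" pieces: $U_\lambda$ and $U_\lambda'$ on the finite interval $[-1,0]$, and $W_\lambda$ together with the convolution $G_\lambda * h$ on $[0,\infty)$, all in terms of $\|y\|_X$. On the finite interval, $\sinh(\lambda(\xi - r))$ and its derivative are uniformly bounded for $\lambda = is$, so $\|U_{is}\|_{H^1(-1,0)} \lesssim \|f\|_{L^2} + |s|^{-1}\|g\|_{L^2}$-type bounds follow routinely from Cauchy–Schwarz. On the half-line the key estimate is for the convolution operator with kernel $G_\lambda(\xi) = (2\sqrt{\lambda})^{-1}e^{-\sqrt{\lambda}|\xi|}$: Young's inequality gives $\|G_\lambda * h\|_{L^2(0,\infty)} \le \|G_\lambda\|_{L^1(\R)}\|h\|_{L^2} = |\real\sqrt{\lambda}|^{-1}|\sqrt{\lambda}|^{-1}\|h\|_{L^2}$, and for $\lambda = is$ this is $\asymp |s|^{-1}\|h\|_{L^2}$, while differentiating once costs a factor $\sqrt{\lambda}$, giving $\|(G_{is}*h)'\|_{L^2} \lesssim |s|^{-1/2}\|h\|_{L^2}$. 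Assembling these: the $w$-component of $x$ contributes $\|w'\|_{L^2} + \|w\|_{L^2}$ to $\|x\|_X$, and tracking the worst factor (the $b(\lambda)e^{-\slb\xi}$ term, whose $L^2$-norm is $|b(\lambda)|\cdot|2\real\sqrt{\lambda}|^{-1/2} \asymp |b(\lambda)|\,|s|^{-1/4}$, and whose derivative's $L^2$-norm is $|b(\lambda)|\cdot|s|^{1/4}$), together with the corresponding $u$-component estimates for $\|u'\|_{L^2}$, produces the two stated rates.

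The main obstacle I expect is the bookkeeping for the constants $a(\lambda), b(\lambda)$: by Cramer's rule these are $\det M_\lambda^{-1}$ times $2\times 2$ determinants whose entries involve $\lambda U_\lambda(0) + f(0) + W_\lambda(0)$ and $U_\lambda'(0) + \slb W_\lambda(0)$, so they inherit pointwise evaluations $f(0)$ and products with $\slb$. One must check that $|f(0)|$ is controlled — it is not bounded by $\|f\|_{L^2(-1,0)}$ alone, but here $f$ is the first component of $y \in X$, so $f \in H^1(-1,0)$ with $f(-1)=0$, whence $|f(0)| = |\int_{-1}^0 f'| \le \|f'\|_{L^2} \le \|y\|_X$; similarly $|U_\lambda(0)|, |U_\lambda'(0)|, |W_\lambda(0)|$ are all $O(|s|^{-1/2}\|y\|_X)$ or better for $\lambda = is$. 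Carefully matching the growth of the numerator against $|\det M_{is}|^{-1}$ then yields $|a(is)|, |b(is)| = O(|s|^{1/2}\|y\|_X)$ as $|s|\to\infty$ and $O(|s|^{-1}\|y\|_X)$ as $|s|\to 0$, and feeding these back through \eqref{equ'} and \eqref{eqw} gives $\|x\|_X = O(|s|^{1/2}\|y\|_X)$ and $O(|s|^{-1}\|y\|_X)$ respectively, which is exactly \textup{(a)} and \textup{(b)}. A secondary subtlety worth isolating as a preliminary step is a clean quantitative lower bound $|\cos s + \sqrt{is}\sin s| \gtrsim 1$ for $|s|$ large, which one gets by combining the two cases $|\sin s|$ small versus bounded below, exactly as in the proof of Theorem~\ref{spectrum} but with explicit constants.
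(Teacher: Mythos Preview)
Your overall strategy matches the paper's exactly: explicit resolvent formulas from Lemma~\ref{biglemma1}(d), a quantitative lower bound on $|\cos s + i\sqrt{is}\sin s|$ (this is the paper's Lemma~\ref{lem1}), Young's inequality for $G_{is}*h$, and then tracking $a(is)$, $b(is)$ through Cramer's rule. The small-$|s|$ analysis goes through essentially as you describe, since the singularity is carried by the explicit factor $\lambda$ in $\det M_\lambda$ and by $\|G_{is}\|_{L^1}\asymp|s|^{-1}$.

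The genuine gap is in the large-$|s|$ regime. Your claim that ``$\|U_{is}\|_{H^1(-1,0)} \lesssim \|f\|_{L^2} + |s|^{-1}\|g\|_{L^2}$-type bounds follow routinely from Cauchy--Schwarz'' is false for the derivative: since $U_{is}'(\xi) = \int_{-1}^\xi \cos(s(\xi-r))\big(isf(r)+g(r)\big)\,\dd r$, Cauchy--Schwarz gives only $|U_{is}'(\xi)| \le |s|\,\|f\|_{L^2} + \|g\|_{L^2}$, and the same factor $|s|$ appears in $|isU_{is}(\xi)|$. With these naive estimates the numerator in Cramer's rule for $a_s$ is $O(|s|^{3/2}\|y\|_X)$, whence $|a_s| = O(|s|^{1/2}\|y\|_X)$ as you write; but \eqref{equ'} contains the product $\lambda a(\lambda)=isa_s$, so $\|u'\|_{L^2}\lesssim |sa_s|+\|U_{is}'\|_{L^2}=O(|s|^{3/2}\|y\|_X)$, a full power of $|s|$ too large. (Your later assertion that $|U_{is}'(0)|=O(|s|^{-1/2}\|y\|_X)$ is also incorrect; this quantity does not decay in $|s|$.) What is missing is precisely the paper's Lemma~\ref{1U}: one integrates by parts in $\int_{-1}^\xi \cos(s(\xi-r))\,isf(r)\,\dd r$, using $f\in H^1(-1,0)$ with $f(-1)=0$, to trade the factor $is$ for a derivative on $f$ and obtain $|isU_{is}(\xi)|,\,|U_{is}'(\xi)|\lesssim \|f\|_{H^1}+\|g\|_{L^2}$ uniformly in $s$. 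Only then does the bookkeeping close with $|sa_s|=O(|s|^{1/2}\|y\|_X)$. You correctly noted that $f\in H^1$ is needed to bound the trace $|f(0)|$; the same regularity is the key to the oscillatory integrals, and this step should be isolated rather than labeled routine.
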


We begin with two simple technical lemmas; see \cite[Lemma 3.3]{BPS1} for a proof of the first.

\begin{lemma}\label{1U}
There exists a constant $C\geq 0$ such that, for all $f \in H^1(-1,0),$ $g\in L^2(-1,0)$ and $s\in \R$,
\begin{align*}
\bigg|\int_{-1}^\xi \sin(s(\xi-r))(isf(r)+g(r))\,\dd r\bigg| & \leq C\|f\|_{H^1}+\|g\|_{L^2}, \qquad \xi \in [-1,0], \\
\bigg|\int_{-1}^\xi \cos(s(\xi-r))(is f(r)+g(r))\,\dd r\bigg| & \leq C\|f\|_{H^1}+\|g\|_{L^2}, \qquad \xi \in [-1,0].
\end{align*}
\end{lemma}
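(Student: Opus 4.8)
The plan is to treat the two summands in the integrand separately: the term involving $g$ is dealt with by a crude pointwise bound, while the term involving $isf$ needs an integration by parts to absorb the (potentially large) factor $s$. For the $g$-part, since $|\sin(s(\xi-r))|\le1$ and $|\cos(s(\xi-r))|\le1$ and the interval $(-1,0)$ has length one, the Cauchy--Schwarz inequality gives
$$\bigg|\int_{-1}^\xi \sin(s(\xi-r))g(r)\,\dd r\bigg|\le\int_{-1}^0|g(r)|\,\dd r\le\|g\|_{L^2},$$
and the same bound holds with $\cos$ in place of $\sin$. This accounts for the $\|g\|_{L^2}$ term on the right-hand side, with constant $1$.

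For the $f$-part I would use the identities $\partial_r\cos(s(\xi-r))=s\sin(s(\xi-r))$ and $\partial_r\sin(s(\xi-r))=-s\cos(s(\xi-r))$. For the first estimate this lets one write
$$is\int_{-1}^\xi\sin(s(\xi-r))f(r)\,\dd r=i\int_{-1}^\xi\partial_r\big[\cos(s(\xi-r))\big]f(r)\,\dd r,$$
and integrating by parts produces the boundary terms $i\big(f(\xi)-\cos(s(\xi+1))f(-1)\big)$ together with $-i\int_{-1}^\xi\cos(s(\xi-r))f'(r)\,\dd r$. Since $H^1(-1,0)$ embeds continuously into $L^\infty(-1,0)$, the boundary terms are at most $C\|f\|_{H^1}$ uniformly in $s$ and $\xi$, while the remaining integral is bounded by $\|f'\|_{L^1}\le\|f'\|_{L^2}\le\|f\|_{H^1}$. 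The second estimate is obtained identically, now writing $is\cos(s(\xi-r))=-i\partial_r[\sin(s(\xi-r))]$; the integration by parts leaves only the single boundary term $i\sin(s(\xi+1))f(-1)$ (the contribution at $r=\xi$ vanishing) plus an integral of $\sin(s(\xi-r))f'(r)$, both again controlled by $C\|f\|_{H^1}$. Summing the $f$- and $g$-contributions yields both claimed inequalities.

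The only point requiring any care is that the bound must be \emph{uniform} in $s\in\R$: a naive estimate would lose a factor of $|s|$ from the $isf$ term. Integration by parts is precisely the device that trades this factor for a derivative on $f$ plus two boundary values, all controlled by $\|f\|_{H^1}$, so in fact there is no genuine obstacle and the argument is essentially that of \cite[Lemma~3.3]{BPS1}.
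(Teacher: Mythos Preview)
Your proof is correct and is precisely the argument one expects: Cauchy--Schwarz for the $g$-part and integration by parts (trading the factor $s$ for a derivative plus boundary terms controlled by the Sobolev embedding $H^1(-1,0)\hookrightarrow L^\infty(-1,0)$) for the $isf$-part. The paper does not supply its own proof here but simply refers to \cite[Lemma~3.3]{BPS1}, whose argument is exactly the one you have reproduced.
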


\begin{lemma}\label{lem1}The following hold:
\begin{enumerate}
  \item[\textup{(a)}]  $|\cos(s)+i\sqrt{is}\sin(s)| \geq \frac{1}{3}$ for $|s|$ sufficiently large;
  \item[\textup{(b)}] $|\cos(s)+i\sqrt{is}\sin(s)| \geq \frac{1}{3}$ for $|s|$ sufficiently small.
\end{enumerate}
\end{lemma}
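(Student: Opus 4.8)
The plan is to derive both bounds from a single explicit formula for $|F(s)|^2$, where $F(s) = \cos(s) + i\sqrt{is}\,\sin(s)$; note that $\det M_{is} = is\,F(s)$ in the notation of Lemma~\ref{biglemma1}, and that $F$ was already shown not to vanish on $\R\setminus\{0\}$ in the proof of Theorem~\ref{spectrum}, so the point here is to make that non-vanishing quantitative near $0$ and near $\infty$. First I would compute $\real F(s)$ and $\imag F(s)$ straight from the branch-cut convention: for $s>0$ one has $i\sqrt{is} = \sqrt{s}\,e^{3\pi i/4} = \sqrt{s}\bigl(-\tfrac{1}{\sqrt2}+\tfrac{i}{\sqrt2}\bigr)$, and for $s<0$ one has $i\sqrt{is} = \sqrt{|s|}\,e^{\pi i/4}$ together with $\sin s = -\sin|s|$; a one-line computation then gives $F(-t) = \overline{F(t)}$ for $t>0$, so that $|F|$ depends only on $r := |s|$, and for every $s\ne0$,
\[ |F(s)|^2 = \Bigl(\cos r - \tfrac{1}{\sqrt2}\sqrt r\,\sin r\Bigr)^2 + \tfrac12 r\sin^2 r = \cos^2 r - \sqrt2\,\sqrt r\,\sin r\cos r + r\sin^2 r. \]
Two features of this identity do the work: the bound $|F(s)|^2 \ge \tfrac12 r\sin^2 r$, and the fact that the right-hand side is the quadratic form with matrix having diagonal entries $1$ and $r$ and off-diagonal entry $-\sqrt{r/2}$, evaluated at the unit vector $(\cos r, \sin r)$.

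For part (a) I would split on the size of $r\sin^2 r$. If $r\sin^2 r \ge \tfrac29$ then $|F(s)|^2 \ge \tfrac12 r\sin^2 r \ge \tfrac19$ immediately. If $r\sin^2 r < \tfrac29$ then $\sin^2 r < \tfrac{2}{9r}$, hence $\cos^2 r > 1 - \tfrac{2}{9r}$ and $\bigl|\sqrt2\,\sqrt r\,\sin r\cos r\bigr| \le \sqrt2\,\sqrt r\,|\sin r| < \tfrac23$, so after discarding the nonnegative term $r\sin^2 r$ we get $|F(s)|^2 > \tfrac13 - \tfrac{2}{9r} \ge \tfrac19$ for $r\ge1$. (A cleaner but slightly less elementary alternative: the smaller eigenvalue of the matrix above is $\mu_-(r) = \tfrac12\bigl(1+r-\sqrt{1+r^2}\bigr)$, which increases to $\tfrac12$ as $r\to\infty$, so $|F(s)|^2 \ge \mu_-(r) \ge \tfrac19$ for all large $r$.) Either way (a) follows.

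For part (b) the estimate is essentially free: as $r\to0$ one has $\cos^2 r\to1$, $\bigl|\sqrt2\,\sqrt r\,\sin r\cos r\bigr| \le \sqrt2\,r^{3/2}\to0$ and $0 \le r\sin^2 r \le r^3\to0$, so $|F(s)|^2\to1$; concretely $|F(s)|^2 \ge \cos^2 r - \sqrt2\,r^{3/2} \ge 1 - r^2 - \sqrt2\,r^{3/2}$, which is $\ge\tfrac19$ once $|s|$ is small enough.

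The only mildly delicate point is part (a): the term $r\sin^2 r$ that dominates for large $r$ collapses near the zeros of $\sin$, and one has to notice that precisely there $\cos^2 r$ is close to $1$ while the cross term $\sqrt2\,\sqrt r\,\sin r\cos r$ remains bounded. The case split above — or, equivalently, the eigenvalue bound, which captures this uniformly — is exactly what handles those neighbourhoods; part (b) is entirely routine.
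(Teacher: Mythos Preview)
Your proof is correct and follows essentially the same route as the paper: both arguments split on the size of $\sqrt{|s|}\,|\sin s|$ (equivalently, your threshold $r\sin^2 r \ge \tfrac29$ is exactly the paper's $\sqrt{|s|/2}\,|\sin s| \ge \tfrac13$), use the imaginary-part bound $|F(s)|^2 \ge \tfrac12 r\sin^2 r$ in one case, and exploit $|\cos s|$ being close to $1$ together with the cross term staying bounded in the other; part~(b) is the same continuity observation. Your explicit computation of $|F|^2$ and the optional eigenvalue remark are pleasant extras, but the core logic coincides with the paper's.
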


\begin{proof}
(a) Note that 
$$\big|\imag\big(\cos(s)+i\sqrt{is}\sin(s)\big)\big|=\sqrt{\frac{|s|}{2}}|\sin(s)|.$$
Hence if $\sqrt{|s|/2} \,|\sin(s)|\ge1/3$ then the required inequality holds. On the other hand, if $\sqrt{|s|/2} \,|\sin(s)|<1/3$ and if $|s|\ge1$, then 
$$\big|\cos(s)+i\sqrt{is}\sin(s)\big|\ge |\cos(s)|-\sqrt{|s|}\,|\sin(s)|>\frac{\sqrt{7}-\sqrt{2}}{3}>\frac{1}{3},$$
which proves part (a).
\smallskip

\noindent (b) This is clear since $\cos(s) \to 1$ and $i\sqrt{is}\sin(s) \to 0$  as $|s|\to 0$.
\end{proof}

\begin{proof}[Proof of Theorem \ref{mainthm}]
Let $s\in \R\setminus\{0\}$ and $y= (f,g,h) \in X$, further defining $x = (u,v,w) \in D(A)$ by $x = R(is,A)y.$ Since $v = is u -f$, we see that
$$ \|x\|_X \lesssim \|s u\|_{L^2} + \|u'\|_{L^2} +\|w\|_{L^2}+\|f\|_{L^2}.$$ 
Here and in the remainder of the proof the implicit constant is independent of both $s$ and $y$. The result will follow once we have established estimates for each of the first three summands on the right-hand side of the above equation. Throughout this proof we use the notation introduced in the proof of Lemma~\ref{biglemma1}(d), although for brevity we write $a_s$ instead of $a(is)$ and $b_s$ instead of $b(is)$. 

Consider $u$ given by \eqref{equ} for $\lambda=is$ with $s\in \R\setminus\{0\}$. By Lemma~\ref{1U}, it is enough to consider $|s a_s|$ in order to estimate $\|su\|_{L^2}$ and $\|u'\|_{L^2}$. Inverting the matrix $M_{is}$ in \eqref{eqM} we obtain
$$ \begin{pmatrix}a_s \\ b_s\end{pmatrix}=   \frac{1}{\det M_{is}}\begin{pmatrix}\sqrt{is} & 1 \\ -is\cos(s) & -is\sin(s)\end{pmatrix}\begin{pmatrix}is U_{is}(0) + f(0) + W_{is}(0) \\ U_{is}'(0) + \sqrt{is} W_{is}(0)\end{pmatrix}.$$
Hence by Lemmas \ref{1U} and \ref{lem1} we have
$$\begin{aligned}
    |sa_s| & \lesssim\big|\sqrt{is}\big(isU_{is}(0)+f(0) +W_{is}(0)\big)+ U_{is}'(0) + \sqrt{is}W_{is}(0)\big|  \\
    & \lesssim (1+|s|^{1/2})(\|f\|_{H^1} + \|g\|_{L^2}) + |s|^{1/2}|W_{is}(0)| \label{sas},
\end{aligned}$$
for $s \in \R\setminus\{0\}$. Noticing that $   |W_{is}(0)|  \lesssim |s|^{-3/4}\|h\|_{L^2}$
this gives 
$$|sa_s| \lesssim (1+|s|^{1/2}+|s|^{-1/4})\|y\|_X,\qquad s \in \R\setminus\{0\}.$$
We now estimate $w$, which is given by \eqref{eqw} for $\lambda=is$ with $s\in \R\setminus\{0\}$. Since
\begin{equation}\label{west}\|w\|_{L^2} \leq  \|G_{is}\|_{L^1}\|h\|_{L^2}+\frac{|b_s|}{(2|s|)^{1/4}},\end{equation}
 it suffices to estimate $\|G_{is}\|_{L^1}$ and $|b_s|$ for $\in \R\setminus\{0\}$. A simple calculation gives  $ \|G_{is}\|_{L^1} \lesssim |s|^{-1}$, and estimating the expression obtained for $b_s$ from the above matrix equation with the aid of Lemmas \ref{1U} and \ref{lem1} yields
$$\begin{aligned}    |b_s|  &\lesssim \|f\|_{H^1}+\|g\|_{L^2} + (1+|s|^{1/2})|W_{is}(0)| \\&\lesssim \|f\|_{H^1}+\|g\|_{L^2} +(|s|^{-3/4}+|s|^{-1/4})\|h\|_{L^2}\end{aligned}$$
for $s\in \R\setminus\{0\}$. Putting together the pieces we obtain $\|x\|_X\lesssim(1+|s|^{1/2}+|s|^{-1})\|y\|_X$ and hence $\|R(is,A)\|\lesssim 1+|s|^{1/2}+|s|^{-1}$ for $s\in \R\setminus\{0\}$. The results now follow.
\end{proof}

\begin{remark}\label{rouche} 
(a) In fact, we have $\|R(is,A)\| \asymp |s|^{-1}$ as $|s| \to 0$. This follows from the upper bound just proved together with Theorem~\ref{spectrum} and  the elementary inequality $$\|R(\lb,A)\| \geq \frac{1}{\dist(\lb,\sigma(A))}, \qquad  \lb \in \rho(A).$$ \smallskip

\noindent (b) Furthermore, it can be shown that
\begin{equation*}\label{eq:rouche}
\limsup_{|s|\to\infty}|s|^{-1/2}\|R(is,A)\| >0.
\end{equation*}
This is done by looking at the distribution of eigenvalues of $A$ in the left half plane and observing by means of  Rouch\'{e}'s theorem at what rate they approach the imaginary axis; see \cite[Theorem 3.4]{BPS1} for a proof which can  easily be adapted to our case. It follows that for any positive function $r$ such that $r(s)=o(|s|^{1/2})$ we have $\|R(is,A)\| \ne O (r(|s|))$ as $|s|\to\infty$. 
\end{remark}

\section{Quantified energy decay}\label{sec:en}

We now convert the resolvent estimates of Theorem~\ref{mainthm} into a rate of energy decay of a certain class of classical solutions which satisfy \eqref{eq1}. The following theorem is the main result of our paper, and its proof relies crucially on recent abstract results obtained in \cite{BaChTo}.

\begin{theorem}\label{opt}
For all $x\in X$ we have $E_x(t) \to0$ as $t\to\infty$, and if $x \in D(A)\cap \Ran(A)$ then $E_x(t) = O(t^{-2})$ as $t\to \infty$. Moreover, this rate is optimal in the sense that, given any positive function $r$ satisfying $r(t) = o(t^{-2})$ as $t \to \infty$, there exists $x \in D(A)\cap\Ran(A)$ such that $E_x(t) \neq O(r(t))$ as $t \to \infty$.
\end{theorem}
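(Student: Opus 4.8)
The plan is to combine the resolvent bounds of Theorem~\ref{mainthm} with the abstract quantified decay results of \cite{BaChTo}. First I would verify the hypotheses needed to apply the Batty--Chill--Tomilov machinery: by Theorem~\ref{wpthm} the operator $A$ generates a bounded $C_0$-semigroup, and by Theorem~\ref{spectrum} we have $\sigma(A)\cap i\R=\{0\}$, so the only boundary spectrum is at the origin. Theorem~\ref{mainthm} then supplies the two one-sided resolvent estimates: $\|R(is,A)\|=O(|s|^{-1})$ as $|s|\to 0$ and $\|R(is,A)\|=O(|s|^{1/2})$ as $|s|\to\infty$. In the framework of \cite{BaChTo}, a singularity of the resolvent of order $|s|^{-\alpha}$ at zero together with growth of order $|s|^{\beta}$ at infinity yields, for orbits with initial data in $D(A)$, a decay rate governed by the inverse of a function built from these exponents; with $\alpha=1$ and $\beta=1/2$ the relevant rate for $\|T(t)A^{-1}\|$ (equivalently, for $\|T(t)x\|$ with $x\in\Ran(A)$) is of the order $t^{-1}$. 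Since $\|T(t)\|$ is bounded, $E_x(t)\asymp\|T(t)x\|_X^2$, so for $x\in D(A)\cap\Ran(A)$ we should obtain $E_x(t)=O(t^{-2})$. The convergence $E_x(t)\to0$ for general $x\in X$ follows from strong stability of the semigroup, which in turn follows from the Arendt--Batty--Lyubich--V\~{u} theorem since $\sigma(A)\cap i\R=\{0\}$ is countable and $A$ has no eigenvalues on the imaginary axis (again by Theorem~\ref{spectrum}).

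For the convergence statement I would argue as follows: the Arendt--Batty--Lyubich--V\~{u} theorem gives $\|T(t)x\|_X\to0$ for every $x\in X$, because $i\R\cap\sigma(A)=\{0\}$ is countable and, as shown in the proof of Theorem~\ref{spectrum}, $A$ has no eigenvalues on $i\R$ (the point $0$ is only an approximate eigenvalue). Since the energy $E_x(t)$ is comparable to $\tfrac12\|T(t)x\|_X^2$, this yields $E_x(t)\to0$ as $t\to\infty$ for all $x\in X$. For the quantitative upper bound I would state precisely which theorem of \cite{BaChTo} is being invoked — presumably one that converts a resolvent bound $\|R(is,A)\|\lesssim M(|s|)$ (where $M$ incorporates both the blow-up at $0$ and the growth at $\infty$) into a decay estimate $\|T(t)A^{-1}\|\lesssim 1/M^{-1}_{\log}(1/t)$ or, in the polynomial-at-infinity plus pole-at-zero regime, simply $\|T(t)x\|=O(t^{-1})$ for $x\in D(A)$ with an extra factor from $\Ran(A)$. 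Here the combined profile $M(|s|)\asymp |s|^{-1}$ near $0$ and $M(|s|)\asymp|s|^{1/2}$ near $\infty$ is regularly varying of the appropriate type, so the inverse function is, up to constants, $t\mapsto t^{-1}$ on the relevant orbits, and squaring gives the $t^{-2}$ rate for the energy.

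For optimality I would follow the strategy of \cite[Theorem~3.4]{BPS1} as indicated in Remark~\ref{rouche}(b). The key input is the lower bound $\limsup_{|s|\to\infty}|s|^{-1/2}\|R(is,A)\|>0$, so that $\|R(is,A)\|\neq O(r(|s|))$ as $|s|\to\infty$ for any $r(s)=o(|s|^{1/2})$. One then invokes the converse direction in the Batty--Chill--Tomilov theory (or the classical Borichev--Tomilov lower bound in its $\|R(is,A)\|\asymp|s|^{1/\beta}$ form): if the decay rate $E_x(t)=O(r(t)^2)$ held for all $x\in D(A)\cap\Ran(A)$ with $r(t)=o(t^{-2})$, one could run the resolvent-estimate-to-decay correspondence backwards to contradict the resolvent lower bound at infinity. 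Concretely, I would exhibit, via Rouch\'e's theorem applied to $\cosh(\lb)+\slb\sinh(\lb)$, a sequence of eigenvalues $\lb_n$ of $A$ with $\imag\lb_n\to\infty$ and $|\real\lb_n|\asymp|\imag\lb_n|^{-1/2}$; feeding the corresponding (approximate) eigenvectors into the semigroup shows $\|T(t)x_n\|$ cannot decay faster than $t^{-1}$ uniformly, and since these eigenvectors lie in $D(A)\cap\Ran(A)$ (as $0\notin\sigma_p(A)$, so $A$ is injective and each eigenvector of a nonzero eigenvalue lies in $\Ran(A)$), an orbit built by suitably summing a subsequence of them provides the required $x\in D(A)\cap\Ran(A)$ with $E_x(t)\neq O(r(t))$.

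The main obstacle I anticipate is the optimality half, specifically making the Rouch\'e argument locate the eigenvalues precisely enough — one needs the exact rate $|\real\lb_n|\asymp|\imag\lb_n|^{-1/2}$, not merely $\real\lb_n\to0$ — and then assembling a single initial datum in $D(A)\cap\Ran(A)$ from infinitely many eigenvectors so that the energy genuinely fails the rate $r(t)$ for the given $r=o(t^{-2})$; this requires a careful choice of coefficients, balancing membership in $D(A)$ (and in $\Ran(A)$) against the need for slow decay, exactly as in \cite[Theorem~3.4]{BPS1}. The upper bound, by contrast, should be essentially a black-box application of \cite{BaChTo} once the correct theorem statement and the function $M$ are pinned down; the only mild subtlety there is combining the near-zero and near-infinity regimes into a single admissible profile to which the abstract theorem applies, together with handling the extra $\Ran(A)$ factor that upgrades the generic $D(A)$-rate by exactly the amount needed to reach $t^{-2}$ for the energy.
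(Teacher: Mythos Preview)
Your treatment of the convergence statement and of the upper bound is essentially that of the paper: the Arendt--Batty--Lyubich--V\~{u} theorem handles $E_x(t)\to0$, and \cite[Theorem~8.4]{BaChTo} gives the quantitative estimate. One small correction: since $0\in\sigma(A)$ the operator $A^{-1}$ does not exist, so the decay should be phrased as $\|T(t)A(I-A)^{-2}\|=O(t^{-1})$, with $\Ran\bigl(A(I-A)^{-2}\bigr)=D(A)\cap\Ran(A)$.

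There is, however, a genuine gap in your optimality argument: you attribute the sharpness of the rate $t^{-1}$ (for $\|T(t)x\|$) to the resolvent growth $|s|^{1/2}$ at infinity, and propose to exploit the eigenvalue asymptotics $|\real\lb_n|\asymp|\imag\lb_n|^{-1/2}$. But this delivers only the weaker lower bound $\|T(t)A(I-A)^{-2}\|\gtrsim t^{-2}$. Indeed, for a unit eigenvector $x_n$ with eigenvalue $\lb_n$ one computes
\[
\|T(t)A(I-A)^{-2}x_n\|=\frac{|\lb_n|}{|1-\lb_n|^{2}}\,e^{\real\lb_n\,t}\asymp|\lb_n|^{-1}e^{-c|\lb_n|^{-1/2}t},
\]
which at times $t\asymp|\lb_n|^{1/2}$ is of order $|\lb_n|^{-1}\asymp t^{-2}$, not $t^{-1}$. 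No combination of such eigenvectors will beat this. The rate $t^{-1}$ for $x\in D(A)\cap\Ran(A)$ is in fact governed \emph{solely} by the resolvent singularity at the origin (cf.\ Remark~\ref{rem:opt}(a)); the growth at infinity only becomes the bottleneck for smoother data in $D(A^k)\cap\Ran(A^{2k})$ with $k\ge1$.

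The paper's optimality argument is quite different and much shorter: assuming $\|T(t)A(I-A)^{-2}\|=o(t^{-1})$ (obtained via the uniform boundedness principle from the hypothetical uniform rate $r(t)=o(t^{-2})$), one applies the converse direction of the Batty--Chill--Tomilov theory (the analogue of \cite[Theorem~6.9]{BaChTo} discussed on \cite[p.~923]{BaChTo}) to conclude that $0\in\sigma_p(A)$, contradicting Theorem~\ref{spectrum}. No eigenvalue localisation and no construction of a specific initial datum are needed.
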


\begin{proof}
The first part follows from Theorem~\ref{spectrum} and the well-known countable spectrum theorem  for bounded $C_0$-semigroups on reflexive spaces due to Arendt, Batty, Lyubich and V\~u; see for instance \cite[Corollary~2.6]{AreBat88}.  Meanwhile by \cite[Theorem 8.4]{BaChTo} we have 
$$\|T(t)A(I-A)^{-2}\| = O(t^{-1}),\qquad t\to\infty.$$ Note also that $\Ran(A(I-A)^{-2})= D(A)\cap \Ran(A)$. Hence for any $x \in D(A)\cap \Ran(A)$, there exists $y \in X$ such that $x = A(I-A)^{-2}y$, and hence
$$E_x(t) = \frac{1}{2} \|T(t)x\|_X^2 =O(t^{-2}),\qquad t\to\infty.$$ 

To prove optimality, suppose there exists a positive function $r$ such that $r(t)=o(t^{-2})$ as $t\to \infty$ and that for all $x\in D(A)\cap \Ran(A)$ we have $E_x(t) = O(r(t))$ as $t\to \infty$. Then a simple application of the uniform boundedness principle shows that
$$\|T(t)A(I-A)^{-2}\|=O(r(t)^{1/2}),\qquad t\to\infty,$$ 
and hence
$$\limsup_{t\to \infty}t\|T(t)A(I-A)^{-2}\| =0.$$
The analogue of \cite[Theorem~6.9]{BaChTo} discussed on \cite[page~923]{BaChTo} implies that $0\in\sigma_p(A)$, which contradicts Theorem~\ref{spectrum}.
\end{proof}

\begin{remark}\label{rem:opt}
(a) In fact, \cite[Theorem 8.4]{BaChTo} also yields the finer statement 
$$\|T(t)A(I-A)^{-3/2}\| = O(t^{-1}), \qquad t\to \infty.$$ Furthermore,  \cite[Proposition 3.10]{BaChTo} shows that $$\Ran(A(I-A)^{-3/2}) = D(A^{1/2})\cap\Ran(A).$$ Using the semigroup property, it is easily seen that $$\|T(t)x\| = O(t^{-2k}), \qquad t\to \infty,$$ for all $x \in D(A^k)\cap\Ran(A^{2k})$ and all integers $k\geq1$. In other words, smoother orbits decay faster in a way that depends on the resolvent estimates at zero and at infinity. Thus, when it comes to initial data with higher regularity, the optimality of the resolvent estimate in Theorem~\ref{mainthm}(a) as $|s|\to \infty$ plays an important role that is not seen for general initial data in $ D(A)\cap \Ran(A)$, where the resolvent estimate for $\|R(is,A)\|$ as $|s| \to 0$ is the sole determining factor for the rate of energy decay.\smallskip

\noindent (b) Note that the rate of energy decay obtained in Theorem~\ref{opt} is slower by a factor of $t^2$ than the rate obtained for the wave-heat system with a finite heat part, both when the wave equation satisfies a Dirichlet boundary condition at $\xi=-1$ and when it satisfies a Neumann boundary condition at $\xi=-1$; see \cite{BPS1, ZZ1}.  Thus the finite heat equation provides a stronger damping than the infinite heat equation. This is unsurprising, of course, given that the uncoupled heat on a  bounded interval equation gives rise to an exponentially stable semigroup, whereas the semigroup corresponding to the heat equation on the half-line is a bounded analytic semigroup but not exponentially stable.
\end{remark}

We finish this section by providing a characterisation for the range $\Ran(A)$ of $A$ in order to better understand the space of classical solutions $D(A)\cap\Ran(A)$ of initial data for which we have the sharp decay estimate.

\begin{proposition}\label{ranA}
Let $(f,g,h) \in X$. Then $(f,g,h) \in \Ran(A)$ if and only if the following hold:
\begin{enumerate}
    \item[\textup{(a)}] $\xi \mapsto \displaystyle\lim_{a\to \infty}\int_\xi^a h(r)\, \dd r \in L^2(0,\infty)$;
    \item[\textup{(b)}]  $\xi \mapsto \displaystyle\lim_{b\to \infty}\int_\xi^b \left(\displaystyle\lim_{a\to \infty}\int_t^a h(r)\, \dd r\right)\dd t \in L^2(0,\infty)$;
    \item[\textup{(c)}]  $f(0) = \displaystyle\lim_{b\to \infty}\int_0^b \left(\displaystyle\lim_{a\to \infty}\int_t^a h(r)\, \dd r\right)\dd t$.
\end{enumerate}
\end{proposition}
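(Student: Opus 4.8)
The plan is to characterise $\Ran(A)$ by solving the equation $Ax = (f,g,h)$ directly. Writing $x = (u,v,w) \in D(A)$, the equation $Ax = (f,g,h)$ unpacks as $v = f$, $u'' = g$, and $w'' = h$, subject to the boundary and coupling conditions $u(-1) = v(-1) = 0$, $u'(0) = w'(0)$ and $v(0) = w(0)$. Since $v = f$ automatically, the condition $v(-1) = 0$ forces $f(-1) = 0$, but this is already encoded in the requirement $(f,g,h) \in X$ (as $f \in H^1(-1,0)$ with $f(-1) = 0$); so the genuine constraints will come from the $w$-component on the half-line and the coupling at $\xi = 0$.

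First I would treat the half-line equation $w'' = h$ with $w \in H^2(0,\infty)$. Integrating twice, a solution exists precisely when $h$ has an "improper antiderivative" that lies in $L^2$: setting $H_1(\xi) = \lim_{a\to\infty}\int_\xi^a h(r)\,\dd r$ and $H_2(\xi) = \lim_{b\to\infty}\int_\xi^b H_1(t)\,\dd t$, one finds $w'(\xi) = -H_1(\xi)$ and $w(\xi) = H_2(\xi)$ are the only candidates compatible with $w, w' \in L^2(0,\infty)$ (the constants of integration are pinned down by the requirement that $w$ and $w'$ decay, i.e.\ have vanishing limits at $+\infty$, which is forced by membership in $H^2(0,\infty)$). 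This yields conditions (a) and (b): they are exactly the statements that $H_1 \in L^2(0,\infty)$ and $H_2 \in L^2(0,\infty)$, which together with $h \in L^2$ give $w \in H^2(0,\infty)$. I would also check the converse direction carefully here — that (a) and (b) suffice for the improper integrals to exist and define an $H^2$ function — using that $h \in L^2(0,\infty)$ already controls the behaviour and that the double-integral expression converges.

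Next, with $w$ determined, the coupling conditions must be met. From $w'(0) = -H_1(0)$ we get $u'(0) = -H_1(0)$, and from $w(0) = H_2(0)$ together with $v(0) = f(0)$ we get $f(0) = H_2(0)$, which is precisely condition (c). It remains to see that on $(-1,0)$ one can always solve $u'' = g$ with $u(-1) = 0$ and the prescribed value $u'(0) = -H_1(0)$: indeed $u(\xi) = \int_{-1}^\xi \big(c + \int_{-1}^t g(r)\,\dd r\big)\,\dd t$ for a suitable constant $c$ chosen to match $u'(0)$, and this always lies in $H^2(-1,0)$ since $g \in L^2(-1,0)$, so no further constraint arises from the wave part. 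Assembling, $(f,g,h) \in \Ran(A)$ iff (a), (b), (c) hold. The main obstacle — and the part deserving the most care — is the analysis of the improper integrals on the half-line: one must verify that conditions (a) and (b), stated as $L^2$-membership of the limit functions, genuinely guarantee that those limits exist pointwise (a.e.) and that the resulting $w$ has $w, w', w'' \in L^2(0,\infty)$, rather than merely being formal antiderivatives; this is where the subtlety of the infinite heat part, as opposed to the finite interval case in \cite{BPS1}, really enters.
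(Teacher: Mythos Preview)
Your proposal is correct and follows essentially the same approach as the paper's own proof: in both directions one solves $w''=h$ on the half-line by integrating twice, using that functions in $H^2(0,\infty)$ and their first derivatives vanish at infinity to pin down the integration constants, then reads off conditions (a)--(c) from the coupling $v(0)=w(0)$ (with $v=f$), and finally observes that the finite-interval wave part $u$ can always be constructed once $u'(0)$ is prescribed, so no further constraint arises there. The paper is slightly less fussy than you are about the existence of the improper integrals in the converse direction---it simply takes conditions (a) and (b) to include, implicitly, that the limits exist---so the ``main obstacle'' you flag is not really treated as an obstacle in the original.
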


\begin{proof}
Suppose that there exists $(u,v,w) \in D(A)$ such that $A(u,v,w) = (f,g,h)$. Then we have
$$u''=g,\ v=f,\ w''=h,\ u(-1)=0,\ u'(0)=w'(0),\ v(0)=w(0).$$
Moreover, $w$ and $w'$ are continuous and decay to zero at infinity. Hence the identities
$$w'(a) - w'(\xi) = \int_\xi^a h(r)\, \dd r, \qquad w(b)-w(\xi) = \int_\xi^b w'(t)\, \dd t$$ imply that the improper integrals of $h$ and $w'$ exist and that
$$w'(\xi) = -\lim_{a \to \infty}\int_t^a h(r)\, \dd r, \qquad  w(\xi) = \displaystyle\lim_{b\to \infty}\int_\xi^b \left(\displaystyle\lim_{a\to \infty}\int_t^a h(r)\, \dd r\right)\dd t.$$ In particular, conditions (a) and (b) are satisfied. Since $f=v$ the coupling condition $v(0) = w(0)$ implies that
$$f(0)= \displaystyle\lim_{b\to \infty}\int_0^b \left(\displaystyle\lim_{a\to \infty}\int_t^a h(r)\, \dd r\right)\dd t,$$ so (c) holds as well.

For the converse, suppose that there exists $(f,g,h)\in X$ such that properties (a), (b) and (c) hold. Let $w \in L^2(0,\infty)$ be given by $$w(\xi) = \displaystyle\lim_{b\to \infty}\int_\xi^b \left(\displaystyle\lim_{a\to \infty}\int_t^a h(r)\, \dd r\right)\dd t, \qquad \xi >0.$$ Then
$$w'(\xi) = -\displaystyle\lim_{a\to \infty}\int_\xi^a h(r)\, \dd r,\qquad \xi>0, $$ and 
$w'' = h$, so  $w \in H^2(0,\infty)$. Defining $v = f \in H^1(-1,0)$, we have $v(-1) = f(-1) = 0$ and $$v(0) = f(0) = \lim_{b\to \infty}\int_0^b \left(\lim_{a\to \infty}\int_t^a h(r)\,\dd r\right)\dd t = w(0).$$ Finally, let $u \in L^2(-1,0)$ be given by
$$u(\xi) = \int_{-1}^{\xi}\int_{-1}^t g(r)\, \dd r \,\dd t + \left(w'(0) - \int_{-1}^0 g(r)\, \dd r\right)(\xi+1), \qquad \xi \in (-1,0).$$ Then $u' \in L^2(-1,0)$ is given by $$u'(\xi) =   w'(0) - \int_{\xi}^0 g(r) \, \dd r, \qquad \xi \in (-1,0).$$ Moreover, $u\in H^2(-1,0)$ and $u'' = g$,  and we have $u(-1) = 0$ and $u'(0) = w'(0)$. Thus $(u,v,w) \in D(A)$ and $(f,g,h)=A(u,v,w) \in \Ran(A).$
\end{proof}

\section{Neumann boundary condition}\label{sec:Neumann}

In this section we consider the same coupled system but with Neumann boundary for the wave part. We show first that the approach taken in the preceding section is not well suited to studying energy decay and in fact leads to an unbounded semigroup. Consider the following system:
\begin{equation}\label{eq5}
    \begin{cases}\begin{aligned}  u_{tt}(\xi,t) &= u_{\xi\xi}(\xi,t), &&& \xi \in (-1,0),\ t>0,\\
     w_t(\xi,t) &= w_{\xi\xi}(\xi,t), &&& \xi \in (0,\infty),\  t>0, \\
     u_t(0,t) &= w(0,t), &u_\xi(0,t) &= w_\xi(0,t), \;& t>0, \\
     u_\xi(-1,t) &= 0, & && t>0, \\
     u(\xi,0)&=u(\xi), & u_t(\xi,0) &= v(\xi),  & \xi \in (-1,0), \\
     w(\xi,0) &= w(\xi), &&& \xi \in (0,\infty),
    \end{aligned}\end{cases}\end{equation}
where the initial data satisfy $u \in H^1(-1,0), v \in L^2(-1,0)$ and $w \in L^2(0,\infty)$.  This system is the same as that considered in \eqref{eq1}, but with the crucial difference that the Dirichlet boundary condition $u(-1,t)=0$  has been replaced by the Neumann boundary condition $u_\xi(-1,t) = 0$ for all $t>0$. In this case, we consider the Hilbert space $X = H^1(-1,0)\times L^2(-1,0)\times L^2(0,\infty)$, endowed with its natural Hilbert space norm, and define the operator $A$ again by $Ax = (v,u'',w'')$ for $x=(u,v,w)$ in the domain
\begin{equation*}D(A) = \big\{(u,v,w) \in X_0 :  u'(-1) = 0,\ u'(0) = w'(0),\ v(0) = w(0)\big\},\end{equation*}
where $X_0=H^2(-1,0)\times H^1(-1,0)\times H^2(0,\infty)$. As in the Dirichlet case, it can be shown that $A$ is closed and densely defined, and furthermore, the Lumer-Phillips theorem can be applied to $A-I$ to show that $A$ generates a $C_0$-semigroup $(T(t))_{t\geq0}$; see the proof of \cite[Theorem 2.1]{BPS1}. As we shall see shortly, however, this semigroup is no longer a contraction semigroup, and indeed it is not even bounded. We shall arrive at this conclusion by studying the kernel and the range of $A$. We omit the proof of the following proposition, which is very similar to that of Proposition~\ref{ranA}.

\begin{proposition}\label{ranAn}
Let $(f,g,h) \in X$. Then $(f,g,h) \in \Ran(A)$ if and only if the following hold:
\begin{enumerate}
    \item[\textup{(a)}] $\xi \mapsto \displaystyle\lim_{a\to \infty}\int_\xi^a h(r)\, \dd r \in L^2(0,\infty)$;
    \item[\textup{(b)}]  $\xi \mapsto \displaystyle\lim_{b\to \infty}\int_\xi^b \left(\displaystyle\lim_{a\to \infty}\int_t^a h(r)\, \dd r\right)\dd t \in L^2(0,\infty)$;
    \item[\textup{(c)}]  $f(0) = \displaystyle\lim_{b\to \infty}\int_0^b \left(\displaystyle\lim_{a\to \infty}\int_t^a h(r)\,\dd r\right)\dd t$;
    \item[\textup{(d)}]  $\displaystyle\int_{-1}^0 g(r)\, \dd r +\displaystyle\lim_{a\to \infty} \int_0^a h(r)\, \dd r= 0$.
\end{enumerate}
\end{proposition}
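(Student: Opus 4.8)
The plan is to imitate the proof of Proposition~\ref{ranA} almost verbatim, the only new feature being the bookkeeping that produces the extra compatibility condition~(d) out of the Neumann boundary condition $u'(-1)=0$ in place of the Dirichlet condition $u(-1)=0$.

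For the forward direction I would suppose $(u,v,w)\in D(A)$ with $A(u,v,w)=(f,g,h)$, so that $u''=g$, $v=f$, $w''=h$, together with $u'(-1)=0$, $u'(0)=w'(0)$ and $v(0)=w(0)$. Exactly as in Proposition~\ref{ranA}, the fact that $w\in H^2(0,\infty)$ makes $w$ and $w'$ continuous and vanishing at infinity, so integrating $w''=h$ and then $w'$ on $[\xi,a]$ and letting $a\to\infty$ gives the usual formulas
$$w'(\xi)=-\lim_{a\to\infty}\int_\xi^a h(r)\,\dd r, \qquad w(\xi)=\lim_{b\to\infty}\int_\xi^b\left(\lim_{a\to\infty}\int_t^a h(r)\,\dd r\right)\dd t,$$
which yield (a) and (b), while $f(0)=v(0)=w(0)$ yields (c). For (d), I would integrate $u''=g$ over $(-1,0)$ and use $u'(-1)=0$ to get $u'(0)=\int_{-1}^0 g(r)\,\dd r$; combining this with the coupling $u'(0)=w'(0)$ and the formula for $w'(0)$ above gives $\int_{-1}^0 g(r)\,\dd r+\lim_{a\to\infty}\int_0^a h(r)\,\dd r=0$, which is (d).

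For the converse, given $(f,g,h)\in X$ satisfying (a)--(d), I would define $w$ by the iterated-integral formula above; conditions (a) and (b) then give $w\in H^2(0,\infty)$ with $w''=h$ and $w'(0)=-\lim_{a\to\infty}\int_0^a h(r)\,\dd r$, and setting $v=f\in H^1(-1,0)$, condition (c) gives $v(0)=w(0)$. Since there is now no boundary constraint at $\xi=-1$, I would simply take $u(\xi)=\int_{-1}^\xi\int_{-1}^t g(r)\,\dd r\,\dd t$ for $\xi\in(-1,0)$, so that $u\in H^2(-1,0)$, $u''=g$, and $u'(-1)=0$ automatically, while condition (d) is exactly what makes $u'(0)=\int_{-1}^0 g(r)\,\dd r$ equal $w'(0)$. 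Hence $(u,v,w)\in D(A)$ and $A(u,v,w)=(f,g,h)$.

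The adaptation is genuinely routine, and I do not expect any real obstacle: the improper integrals are handled just as in Proposition~\ref{ranA}, and the $H^2$-regularity and decay of $w$ follow from Young's inequality and the embedding $H^1(0,\infty)\hookrightarrow C_0[0,\infty)$ as before. The one point worth stressing is the structural difference from Proposition~\ref{ranA}: in the Dirichlet case one adds a linear term $(w'(0)-\int_{-1}^0 g)(\xi+1)$ to $u$ in order to enforce $u'(0)=w'(0)$ without disturbing $u(-1)=0$, and no scalar constraint on $g$ and $h$ is needed; here the Neumann condition removes that freedom, so the naive antiderivative of $g$ must already satisfy $u'(0)=w'(0)$, and condition (d) is precisely this requirement (equivalently, in the converse the linear correction term is forced to vanish by (d)).
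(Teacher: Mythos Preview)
Your proposal is correct and is precisely the argument the paper has in mind: the paper omits the proof of Proposition~\ref{ranAn}, stating only that it is ``very similar to that of Proposition~\ref{ranA}'', and your write-up carries out exactly this adaptation, correctly identifying that the Neumann condition $u'(-1)=0$ both removes the freedom to add a linear correction to $u$ and produces the extra scalar constraint~(d). One small remark: the reference to Young's inequality in your final paragraph is a leftover from the resolvent computations in Section~\ref{sec:gen} and plays no role here---the $H^2$-regularity of $w$ follows directly from conditions~(a) and~(b) together with $w''=h\in L^2(0,\infty)$.
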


Unlike in the Dirichlet case, however, $A$ is no longer injective.

\begin{proposition}\label{kerran}
The following hold:
\begin{enumerate}
    \item[\textup{(a)}]  $\Ker(A) = \spann{\{(1,0,0)\}}$;
    \item[\textup{(b)}]  $\overline{\Ran(A)} = X$.
\end{enumerate}
\end{proposition}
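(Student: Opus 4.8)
The plan is to prove the two parts of Proposition~\ref{kerran} more or less independently, using Proposition~\ref{ranAn} for part~(b).

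\textbf{Part (a).} First I would solve $A(u,v,w)=0$ directly. The equation $Ax=0$ for $x=(u,v,w)\in D(A)$ forces $v=u''=0$ and $w''=0$ on $(0,\infty)$. Since $w\in L^2(0,\infty)$ and $w''=0$ means $w$ is affine, we must have $w=0$; hence $w'(0)=0$ and $w(0)=0$. The coupling conditions then give $u'(0)=w'(0)=0$ and $v(0)=w(0)=0$ (the latter being automatic since $v=0$). From $u''=0$ we get $u$ affine, and $u'(0)=0$ together with the Neumann condition $u'(-1)=0$ both just say $u'\equiv0$, so $u$ is constant. Thus $(u,v,w)=(c,0,0)$ for some $c\in\C$, and conversely $(1,0,0)\in D(A)$ with $A(1,0,0)=0$. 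This gives $\Ker(A)=\spann\{(1,0,0)\}$. This part is routine; the only subtlety is checking $(1,0,0)$ actually satisfies all the domain conditions, which is immediate.

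\textbf{Part (b).} The key observation is that $\overline{\Ran(A)}=X$ if and only if $\Ran(A)^\perp=\{0\}$, but since $A$ is densely defined and closed this is equivalent to $\Ker(A^*)=\{0\}$; alternatively, and more in the spirit of the rest of the paper, I would argue directly from the explicit description in Proposition~\ref{ranAn}. The idea is that conditions (a)--(c) of Proposition~\ref{ranAn} impose no real restriction on a dense set --- given any target $(f,g,h)$, one can perturb $h$ slightly (in $L^2$) to arrange that its iterated improper integrals exist and lie in $L^2(0,\infty)$, e.g.\ by first approximating $h$ by a compactly supported smooth function $\tilde h$ with $\int_0^\infty \tilde h=0$ and whose first antiderivative also has zero total integral, which makes the iterated integrals compactly supported; then (a) and (b) hold automatically. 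Condition (c) fixes the single scalar $f(0)$, which can be matched by adjusting $f$ by an arbitrarily small amount in $H^1(-1,0)$ without disturbing $f(-1)$ or the $L^2$-size much. Condition (d) is a single linear constraint $\int_{-1}^0 g + \lim_{a\to\infty}\int_0^a h=0$ relating $g$ and (the perturbed) $h$; since $\lim_{a\to\infty}\int_0^a h=0$ after the perturbation of $h$ above, this reduces to $\int_{-1}^0 g=0$, which can be achieved by an arbitrarily small $L^2$-perturbation of $g$. Assembling these perturbations shows every $(f,g,h)\in X$ is a limit of elements of $\Ran(A)$.

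\textbf{Main obstacle.} The delicate point is part~(b): one must be careful that the successive perturbations used to fix conditions (a)--(d) are genuinely small in the $X$-norm \emph{and} mutually compatible --- in particular, the perturbation of $h$ needed for (a) and (b) must be chosen so that it also kills $\lim_{a\to\infty}\int_0^a h$, so that (d) decouples into a condition on $g$ alone, and the perturbation fixing $f(0)$ in (c) must preserve the boundary value $f(-1)=0$ so the result stays in $X$. A clean way to organise this is to exhibit an explicit dense subspace of $X$ lying inside $\Ran(A)$: for instance the set of $(f,g,h)$ with $f$ smooth vanishing near $-1$, $g$ smooth of mean zero, and $h$ smooth, compactly supported in $(0,\infty)$, with $\int_0^\infty h = 0$ and $\int_0^\infty\!\int_t^\infty h(r)\,\dd r\,\dd t$ finite, subject to the two scalar identities (c) and (d). Verifying this set satisfies (a)--(d) is immediate, and its density in $X$ follows from standard approximation arguments. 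I would present part~(b) along these lines rather than via $\Ker(A^*)$, to keep the paper self-contained and consistent with its concrete style.
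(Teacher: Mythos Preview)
Part~(a) is fine and matches what the paper does (the paper simply says ``easily verified'').

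Part~(b) has a genuine gap. Your claim that ``Condition~(c) fixes the single scalar $f(0)$, which can be matched by adjusting $f$ by an arbitrarily small amount in $H^1(-1,0)$'' is false: point evaluation $f\mapsto f(0)$ is a \emph{bounded} linear functional on $H^1(-1,0)$, so an $H^1$-perturbation of size $\ep$ can move $f(0)$ by at most $C\ep$. After your perturbation of $h$ (you force the iterated integral of $\tilde h$ to vanish), condition~(c) demands $f(0)=0$, and there is no reason for the original $f(0)$ to be small. The same issue undermines your ``clean way'': the density of your candidate subspace is precisely the non-trivial point, and ``standard approximation arguments'' do not suffice because (c) couples the $H^1$-continuous functional $f\mapsto f(0)$ to the $L^2$-\emph{discontinuous} iterated integral of $h$. (Minor aside: in the Neumann setting $X$ carries no constraint $f(-1)=0$, so your remarks about preserving that boundary value are beside the point.)

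The paper's resolution is to leave $f$ essentially untouched and instead adjust $h$. The key observation is that one can add to $h$ a function which is arbitrarily small in $L^2(0,\infty)$ yet has an arbitrarily \emph{large} iterated improper integral: concretely, $h_{2,\tau}(\xi)=\ep e^{i\sigma}(1+\xi)^{-2}\chi_{(0,\tau)}(\xi)$ has $L^2$-norm below $\ep$ but iterated integral growing like $\ep\log\tau$. Choosing $\tau$ appropriately lets one hit any prescribed value of the iterated integral, hence match (c) up to an $O(\ep)$ correction of $f$ (and of $g$, to restore (d)). This is the missing idea in your argument.
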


\begin{proof}
Part (a) is easily verified, so we focus on proving (b). Let $(f,g,h) \in X$ and $\ep > 0$. Let $h_0 \in L^2(0,\infty)$ be a compactly supported function such that $\|h-h_0\|_{L^2} < \ep$, and let $r_0 \ge 0$, $\theta \in [0,2\pi)$ be such that
$$r_0 e^{i\theta} = \int_{-1}^0 g(r)\, \dd r +\int_0^\infty h_0(r)\,\dd r.$$ We may find a constant $\xi_0\ge0$ such that
$$\int_0^{\xi_0}\frac{\ep}{\xi+1}\, \dd \xi = r_0,$$ and we define $h_1 \in L^2(0,\infty)$  by
$$h_1(\xi) = -\frac{\ep e^{i\theta}}{\xi+1}\chi_{(0,\xi_0)}(\xi), \qquad \xi >0,$$ where $\chi_I$ denotes the indicator function of the interval $I\subset \R$. Note that $h_1=0$ if $r_0=0$ and that $\|h_1\|_{L^2} < \ep$. Now let
$$c= f(0) - \int_0^\infty \left(\int_t^\infty (h_0(r)+ h_1(r) )\, \dd r\right)\dd t,$$  and let $\sigma\in[0,2\pi)$ be such that $c_0=e^{-i\sigma}c\ge0$. If $c=0$, then it is straightforward to check that the vector $y_0\in X$ defined by $y_0=(f,g,h_0+h_1)$ satisfies conditions (a) through (d) of Proposition~\ref{ranAn}, so that $y_0\in \Ran(A)$, and moreover $\|y-y_0\|_X<\sqrt{2}\ep$. Suppose now that $c\ne0$ and,  given $\tau >0$, define $h_{2,\tau} \in L^2(0,\infty)$ by
$$h_{2,\tau}(\xi) = \frac{\ep e^{i\sigma}}{(1+\xi)^2}\chi_{(0,\tau)}(\xi), \qquad \xi >0.$$ Then $\|h_{2,\tau}\|_{L^2}<\ep/\sqrt{3}$.  Moreover,
$$\int_\xi^\infty h_{2,\tau}(r)\, \dd r = \ep e^{i\sigma}\left(\frac{1}{\xi+1} -\frac{1}{\tau+1}\right), \qquad 0<\xi< \tau,$$
while for $\xi\ge\tau$ the left-hand side equals zero. Hence $$\int_0^\infty \left(\int_t^\infty h_{2,\tau}(r)\, \dd r\right)\dd t = \ep e^{i\sigma}\left(\log(\tau+1) - \frac{\tau}{\tau+1}\right).$$ We now make the choice $\tau = e^{c_0/\ep}-1$ and define $y_0\in X$ by
$${y_0} = \left(f-\frac{\ep\tau e^{i\sigma}}{\tau+1},g-\frac{\ep\tau e^{i\sigma}}{\tau+1},h_0+h_1  + h_{2,\tau}\right).$$
It follows  from Proposition~\ref{ranAn} that $y_0\in\Ran(A)$, and furthermore $\|y-y_0\|_X<3\ep$. Since $\ep>0$ was arbitrary, $\Ran(A)$ is dense in $X$.
\end{proof}

It is a well known fact in ergodic theory that if  $(T(t))_{t\geq0}$ is a bounded on a Hilbert space $X$ (or more generally on a reflexive Banach space), then \begin{equation}\label{met}X=\overline{\Ran(A)}\oplus \Ker(A);\end{equation} see for instance \cite[Section~4.3]{ABHN}. By Proposition~\ref{kerran} this splitting fails to hold for our operator $A$, which allows us to make the following observation.

\begin{corollary}
The semigroup $(T(t))_{t\geq0}$ generated by $A$ is unbounded.
\end{corollary}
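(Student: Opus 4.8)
The plan is to derive the unboundedness of $(T(t))_{t\geq0}$ directly from the failure of the ergodic splitting \eqref{met} established via Proposition~\ref{kerran}. The argument is a short proof by contradiction: assume $(T(t))_{t\geq0}$ is bounded, invoke the standard mean ergodic decomposition for bounded semigroups on reflexive Banach spaces (stated as \eqref{met}, with reference to \cite[Section~4.3]{ABHN}), and then observe that this directly contradicts what Proposition~\ref{kerran} tells us about $A$. Indeed, by Proposition~\ref{kerran}(b) we have $\overline{\Ran(A)}=X$, while by Proposition~\ref{kerran}(a) we have $\Ker(A)=\spann\{(1,0,0)\}\ne\{0\}$; hence $\overline{\Ran(A)}\cap\Ker(A)=\Ker(A)\ne\{0\}$, so the sum in \eqref{met} cannot be direct, and in fact $\overline{\Ran(A)}\oplus\Ker(A)$ is not even well-defined as an internal direct sum. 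This contradicts \eqref{met}, so $(T(t))_{t\geq0}$ must be unbounded.

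Concretely, I would carry out the steps in the following order. First, recall the hypothesis: suppose for contradiction that $\sup_{t\geq0}\|T(t)\|<\infty$. Second, since $X$ is a Hilbert space (hence reflexive) and $A$ is the generator, apply the mean ergodic theorem in the form \eqref{met} to get $X=\overline{\Ran(A)}\oplus\Ker(A)$ as an (internal, topological) direct sum. Third, substitute the two identities from Proposition~\ref{kerran}: the left summand is all of $X$ and the right summand is the one-dimensional space $\spann\{(1,0,0)\}$. Fourth, note that $(1,0,0)\in X$ is a nonzero vector lying in both summands, so their intersection is nontrivial; this is incompatible with the sum being direct. Conclude that the assumption was false, i.e. $(T(t))_{t\geq0}$ is unbounded. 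This is essentially a two-line proof once Proposition~\ref{kerran} is in hand.

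There is no real obstacle here, since all the substantive work is already packaged in Proposition~\ref{kerran} and in the cited ergodic-theoretic fact \eqref{met}. The only point requiring a modicum of care is making sure the contrapositive is applied correctly: \eqref{met} is a necessary condition for boundedness, so its failure forces unboundedness, not merely non-contractivity. It is worth remarking in passing (though not strictly needed) that the failure is rather dramatic — not only is the sum not direct, but $\overline{\Ran(A)}$ already exhausts $X$ while $\Ker(A)$ is nonzero, so the obstruction to boundedness is maximal in a sense. One could alternatively give a more hands-on proof by exhibiting an orbit whose norm grows, but routing through \eqref{met} and Proposition~\ref{kerran} is cleaner and keeps the exposition self-contained.
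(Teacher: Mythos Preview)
Your proposal is correct and follows exactly the paper's own argument: the corollary is deduced immediately from the failure of the ergodic splitting \eqref{met}, which in turn follows from Proposition~\ref{kerran}. There is nothing to add.
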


\begin{remark}
We do not require the full strength of the splitting \eqref{met} in order to show that $A$ generates an unbounded semigroup, and in fact it is enough to show that ${\overline{\Ran(A)}}\cap \Ker(A) \neq \{0\}$; see \cite[Section~4.3]{ABHN} again. This can be done somewhat more directly by proving that $(1,0,0) \in {\overline{\Ran(A)}}$. Note also that since the semigroup in the Dirichlet case is bounded, \eqref{met} implies that  the generator has dense range in that case.
\end{remark}

 The underlying reason why the semigroup $(T(t))_{t\geq0}$ considered here is unbounded is that the wave equation on a finite interval with Neumann boundary conditions at both ends by itself gives rise to an unbounded semigroup, and the damping provided by the coupling with the  heat equation on the half-line is insufficient to make $(T(t))_{t\geq0}$ bounded. By contrast, if we replace the infinite heat part with a heat equation on a \emph{finite} interval, which is precisely the case considered in \cite{BPS1}, then the damping is sufficiently strong to make the semigroup corresponding to the coupled system bounded; see also Remark~\ref{rem:opt}(b). 
 
 Observe, however, that the norm on $X$ contains the $L^2$-norm of the displacement of the wave, which does not appear in the physically motivated definition of the energy given in \eqref{eq:energy}. This leads us to consider an alternative formulation of our wave-heat system. Indeed, let $Y=L^2(-1,0)\times L^2(-1,0)\times L^2(0,\infty)$, endowed with its natural Hilbert space norm, and define the operator $B$ on $Y$ by $By=(v',u', w'')$ for $y=(u,v,w)$ in the domain 
 $$D(B)=\big\{(u,v,w)\in Y_0:u(-1)=0,\ u(0)=w'(0),\ v(0)=w(0)\big\},$$
 where $Y_0=H^1(-1,0)\times H^1(-1,0)\times H^2(0,\infty)$. Following the same steps as in Section~\ref{sec:gen} we may show that $B$ is the generator of a contractive $C_0$-semigroup $(S(t))_{t\geq0}$ on $Y$. Given $y=(u,v,w)\in Y$ the orbit $\{ S(t)y:t\ge0\}$ is the solution of the wave-heat system in which $w$ is the initial heat profile, $v$ is the initial wave velocity, as before, but $u$ is now the initial \emph{slope} of the wave part rather than its \emph{displacement}. It is due to the fact that the displacement of the wave part no longer features in our formulation that, unlike in the previous setup, we now obtain a bounded semigroup. Note also that with only a slight abuse of notation we may write the energy of the solution corresponding to the initial data $y\in Y$ as 
 $$E_y(t)=\frac12\|S(t)y\|_{Y}^2,\quad t\ge0.$$
 We may now proceed as in the case of Dirichlet boundary condition to study the spectrum of the generator $B$, the growth of the resolvent operator along the imaginary axis and the resulting decay rates for suitable orbits of the semigroup $(S(t))_{t\ge0}$. The arguments are entirely analogous to those presented in Sections~\ref{sec:gen}, \ref{sec:res} and \ref{sec:en}, requiring only minor modifications but no new ideas. We summarise the relevant statements as follows.
 
 \begin{theorem}\label{thm:Neumann}
 The spectrum of the generator $B$ is the disjoint union 
$$\sigma(B) = (-\infty,0]\cup\sigma_p(B),$$ 
where 
\begin{align*}\sigma_p(B) = \big\{\lb \in \C_- :\slb \cosh(\lb)+\sinh(\lb)  = 0\big\}.\end{align*}
In particular, the spectrum satisfies $\sigma(B) \cap i\R = \{0\}$ and we have
\begin{equation}\label{eq:res_cases}
\|R(is,B)\|=\begin{cases}
O(|s|^{1/2}), & |s|\to\infty,\\
O(|s|^{-1}), &|s|\to0.
\end{cases}\end{equation}
For all $y\in Y$ we have $E_y(t)\to0$ as $t\to\infty$, and if $y\in D(B)\cap\Ran(B)$ then $E_y(t)=O(t^{-2})$ as $t\to\infty$. Finally, $\Ran(B)$ consists of all those $(f,g,h)\in Y$ satisfying 
\begin{enumerate}
    \item[\textup{(a)}] $\xi \mapsto \displaystyle\lim_{a\to \infty}\int_\xi^a h(r)\, \dd r \in L^2(0,\infty)$;
    \item[\textup{(b)}]  $\xi \mapsto \displaystyle\lim_{b\to \infty}\int_\xi^b \left(\displaystyle\lim_{a\to \infty}\int_t^a h(r)\, \dd r\right)\dd t \in L^2(0,\infty)$;
    \item[\textup{(c)}]  $\displaystyle\int_{-1}^0 g(r)\,\dd r +\displaystyle\displaystyle\lim_{a\to \infty}\int_0^a h(r)\, \dd r=0$.
\end{enumerate} 
 \end{theorem}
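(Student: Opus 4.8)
The plan is to establish each assertion by transplanting the corresponding result for the Dirichlet operator $A$ from Sections~\ref{sec:gen}--\ref{sec:en}, with the obvious modifications. First I would check well-posedness: $B$ is closed and densely defined by the arguments of Lemma~\ref{biglemma1}(a),(b), and the dissipativity computation of Lemma~\ref{biglemma1}(c) now gives, after integration by parts and use of the coupling conditions $u(0)=w'(0)$ and $v(0)=w(0)$, that $\real\langle By,y\rangle_Y=-\|w'\|_{L^2}^2\le0$. To show $I-B$ is surjective — and, more generally, to prepare for the resolvent bounds — I would solve $(\lb-B)(u,v,w)=(f,g,h)$ for $\lb\in\C\setminus(-\infty,0]$: the wave components satisfy the \emph{first-order} system $u'=\lb v-g$, $v'=\lb u-f$ on $(-1,0)$ subject to $u(-1)=0$, whose general solution is $u(\xi)=a(\lb)\sinh(\lb(\xi+1))+\tilde U(\xi)$, $v(\xi)=a(\lb)\cosh(\lb(\xi+1))+\tilde V(\xi)$ with one free constant $a(\lb)$ and an explicit particular solution $(\tilde U,\tilde V)$ obtained by variation of parameters, while $w$ solves $w''=\lb w-h$ exactly as in \eqref{eqc}--\eqref{eqw}, with a second free constant $b(\lb)$. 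Imposing the two coupling conditions and using $(G_\lb*h)(0)=W_\lb(0)$ and $(G_\lb*h)'(0)=\slb W_\lb(0)$ leads, as in \eqref{eqM}, to a $2\times2$ linear system with coefficient matrix
$$M_\lb=\begin{pmatrix}\cosh\lb & -1\\ \sinh\lb & \slb\end{pmatrix},$$
so that $\det M_\lb=\slb\cosh\lb+\sinh\lb$ and in particular $\det M_1=e\ne0$. As in Lemma~\ref{biglemma1}(d) this gives surjectivity of $I-B$, hence (by Lumer--Phillips) the contractive semigroup $(S(t))_{t\ge0}$, and shows that $\lb-B$ is invertible whenever $\slb\cosh\lb+\sinh\lb\ne0$.

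Next, the description of $\sigma(B)$ follows the proof of Theorem~\ref{spectrum} essentially verbatim: the Weyl sequence $x_n=(0,0,h_n^\lb)$ lies in $D(B)$ and shows $(-\infty,0]\subseteq\sigma(B)$, the same short argument rules out eigenvalues in $(-\infty,0]$, and for $\lb\in{\overline{\C_-}}\setminus\{0\}$ one has $\lb\in\sigma_p(B)$ precisely when $\det M_\lb=0$, i.e. $\slb\cosh\lb+\sinh\lb=0$, while $\lb-B$ is invertible otherwise. That no such $\lb$ lies on $i\R\setminus\{0\}$ follows by writing $\lb=is$ and noting that $\det M_{is}=\sqrt{is}\cos s+i\sin s$ satisfies $\real(\det M_{is})=\sqrt{|s|/2}\,\cos s\ne0$ whenever $\cos s\ne0$, while $\cos s=0$ forces $\det M_{is}=\pm i\ne0$; hence $\sigma(B)\cap i\R=\{0\}$ and, in particular, $0\notin\sigma_p(B)$.

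For the resolvent bounds I would first prove the analogue of Lemma~\ref{lem1}, namely $|\sqrt{is}\cos s+i\sin s|\ge c$ for some $c>0$ and all sufficiently large $|s|$ (via a case split on $|\sin s|$, as in Lemma~\ref{lem1}(a)) and $|\sqrt{is}\cos s+i\sin s|\ge\tfrac12|s|^{1/2}$ for all sufficiently small $|s|$ (since $|\sqrt{is}\cos s|=|s|^{1/2}|\cos s|$ dominates $|\sin s|\le|s|$ as $s\to0$). One then argues as in the proof of Theorem~\ref{mainthm}: with $(u,v,w)=R(is,B)(f,g,h)$ and the explicit forms above, the particular solution $(\tilde U,\tilde V)$ is bounded pointwise, hence in $L^2(-1,0)$, by $\|f\|_{L^2}+\|g\|_{L^2}$ uniformly in $s$ — so no analogue of Lemma~\ref{1U} is required, the wave system being first order — while $a(is)$ and $b(is)$ are controlled by inverting $M_{is}$, using the lower bounds on $|\det M_{is}|$ together with $|W_{is}(0)|\lesssim|s|^{-3/4}\|h\|_{L^2}$ and $\|G_{is}\|_{L^1}\lesssim|s|^{-1}$ as in Section~\ref{sec:res}. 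Tracking the powers of $|s|$ one obtains $\|u\|_{L^2}+\|v\|_{L^2}\lesssim(1+|s|^{1/2}+|s|^{-3/4})\|y\|_Y$ and $\|w\|_{L^2}\lesssim(1+|s|^{-1})\|y\|_Y$, whence $\|R(is,B)\|\lesssim1+|s|^{1/2}+|s|^{-1}$, which is \eqref{eq:res_cases}. The one point here that genuinely requires care is the behaviour near zero: in contrast with the Dirichlet symbol, which vanishes at $\lb=0$ like $\lb$, the symbol $\slb\cosh\lb+\sinh\lb$ vanishes there like $\slb$, so $\det M_{is}$ is only of size $|s|^{1/2}$ as $s\to0$, and one must check that this loss is precisely compensated by the positive powers of $|s|$ in the numerators — in particular that, as for $A$, the $O(|s|^{-1})$ blow-up is produced entirely by the heat part (via $\|G_{is}\|_{L^1}$ and $b(is)$) while the wave part contributes only $O(|s|^{-3/4})$.

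The energy statements then follow from \eqref{eq:res_cases} exactly as in Theorem~\ref{opt}: since $(S(t))_{t\ge0}$ is bounded on the reflexive space $Y$, $\sigma(B)\cap i\R=\{0\}$ is countable and $0\notin\sigma_p(B)$, the Arendt--Batty--Lyubich--V\~u theorem \cite[Corollary~2.6]{AreBat88} gives $S(t)y\to0$, hence $E_y(t)\to0$, for all $y\in Y$; and \cite[Theorem~8.4]{BaChTo} applied with \eqref{eq:res_cases} yields $\|S(t)B(I-B)^{-2}\|=O(t^{-1})$, so that $E_y(t)=\tfrac12\|S(t)y\|_Y^2=O(t^{-2})$ for every $y\in D(B)\cap\Ran(B)=\Ran(B(I-B)^{-2})$. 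For the range, I would argue as in Proposition~\ref{ranA}: if $(f,g,h)=B(u,v,w)$ then $v'=f$, $u'=g$, $w''=h$, $u(-1)=0$, $u(0)=w'(0)$, $v(0)=w(0)$, and the decay of $w$ and $w'$ at infinity yields conditions (a), (b) and the identity $w'(0)=-\lim_{a\to\infty}\int_0^a h(r)\,\dd r$, while $u(0)=\int_{-1}^0 g(r)\,\dd r$ combined with $u(0)=w'(0)$ yields condition (c); conversely, given $(f,g,h)\in Y$ satisfying (a)--(c), one recovers $w$ from the iterated integral in (b), sets $v(\xi)=w(0)-\int_\xi^0 f(r)\,\dd r$ and $u(\xi)=\int_{-1}^\xi g(r)\,\dd r$, and verifies directly that $(u,v,w)\in D(B)$ with $B(u,v,w)=(f,g,h)$. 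Finally, it is worth recording that $B$, unlike the operator $A$ studied earlier in this section, is injective, so that together with the resolvent bounds the optimality argument of Theorem~\ref{opt} — which hinges on $0\notin\sigma_p(B)$ — applies without change and shows the rate $E_y(t)=O(t^{-2})$ to be sharp.
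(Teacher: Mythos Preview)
Your proposal is correct and follows precisely the route the paper intends: the paper does not give a separate proof of Theorem~\ref{thm:Neumann} but states that ``the arguments are entirely analogous to those presented in Sections~\ref{sec:gen}, \ref{sec:res} and \ref{sec:en}, requiring only minor modifications but no new ideas,'' and you have carried out exactly these modifications. In particular, your identification of the first-order wave system (so that no analogue of Lemma~\ref{1U} is needed), the coupling matrix $M_\lb$ with $\det M_\lb=\slb\cosh\lb+\sinh\lb$, the revised lower bounds on $|\det M_{is}|$ (of order $|s|^{1/2}$ near zero, bounded below near infinity), and the range computation via $u(0)=\int_{-1}^0 g=w'(0)$ are all the right adaptations, and your tracking of the powers of $|s|$ confirms that the $O(|s|^{-1})$ blow-up near zero again comes entirely from the heat component.
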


\begin{remark}
As in the Dirichlet case, the energy decay rate in Theorem~\ref{thm:Neumann} is optimal in the same sense as in Theorem~\ref{opt} and the resolvent estimates in \eqref{eq:res_cases} are sharp; see Remark~\ref{rouche}.
\end{remark}

\end{document}